\newcommand{\mr}[1]{\mathring{#1}}
\def\p{\partial}
\newtheorem{theorem}{Theorem}[section]
\newtheorem{lemma}[theorem]{Lemma}
\newtheorem{proposition}[theorem]{Proposition}
\theoremstyle{definition}
\newtheorem{definition}[theorem]{Definition}
\newtheorem{example}[theorem]{Example}
\newtheorem{remark}[theorem]{Remark}
\newcommand {\A}{\mathbb{A}}
\newcommand {\bay}{\begin{array}}
\newcommand {\eay}{\end{array}}
\newcommand {\bdm}{\begin{displaymath}}
\newcommand {\edm}{\end{displaymath}}
\newcommand{\ups}{\upsilon}
\newcommand{\w}{\varpi}
\newcommand{\vhi}{\varphi}
  \let\c@equation\undefined
  \let\c@section\undefined
  \let\c@subsection\undefined
  \let\c@zad\undefined
  \newcounter{section}
  \newcounter{equation}[section]
  \newcounter{subsection}[section]
\newcommand{\beq}{\begin{equation}}
\newcommand{\eeq}{\end{equation}}
\newcommand{\bd}{\begin{displaymath}}
\newcommand{\ed}{\end{displaymath}}
\newcommand{\be}{\begin{equation}}
\newcommand{\ee}{\end{equation}}
\newcommand{\bdow}{\begin{proof}}
\newcommand{\edow}{\end{proof}}
\newcommand{\papap}{\papa \end{proof}}
\newfont{\smoldita}{cmmib8}
\newfont{\boldita}{cmmib10}
\newfont{\bboldita}{cmmib10}
\newcommand{\ti}[1]{\tilde{#1}}
\newcommand{\la}{\lambda}
\newcommand{\mb}[1]{\boldsymbol{#1}}
\newcommand{\mbb}[1]{\mathbb{#1}}
\newcommand{\mc}[1]{\mathcal{#1}}
\newcommand{\me}{\mb{\varepsilon}}
\newcommand{\msf}[1]{\mathsf{#1}}
\newcommand{\spp}{\text{supp}\;}
\newcommand{\bv}{\boldsymbol{v}}
\begin{document}
\begin{center}{\Large Telegraph type systems on networks and port-Hamiltonians. II. Graph realizability}\end{center}

\begin{center}{J. Banasiak\footnote{The research has been partially supported by the National
Science Centre of Poland Grant 2017/25/B/ST1/00051 and the National Research Foundation of South Africa Grant 82770} \\ \small{Department of Mathematics and Applied Mathematics, University of Pretoria}\\ \small{Institute of Mathematics,  \L\'{o}d\'{z} University of Technology}\\ \small{International Scientific Laboratory of
Applied Semigroup Research, South Ural
State University}\\ \small{e-mail: jacek.banasiak@up.ac.za}\\\& \\A. B\l och\footnote{The research was completed while the author was a Doctoral Candidate in the Interdisciplinary Doctoral School at the \L\'{o}d\'{z} University of Technology, Poland.}\\
 \small{Institute of Mathematics,  \L\'{o}d\'{z} University of Technology} \\\small{e-mail: adam.bloch@dokt.p.lodz.pl}}\end{center}
\begin{abstract}
Hyperbolic systems on networks often can be written as systems of first order equations on an interval, coupled by transmission conditions at the endpoints, also called port-Hamiltonians. However, general results for the latter have been difficult to interpret in the network language.
The aim of this paper is to derive conditions under which a port-Hamiltonian with general linear Kirchhoff's boundary conditions  can be written as a system of $2\times 2$ hyperbolic equations on a metric graph $\Gamma$. This is achieved by interpreting the matrix of the boundary conditions as a potential map of vertex connections of $\Gamma$ and then showing that, under the derived assumptions, that matrix can be used to determine the adjacency matrix of $\Gamma$.\\
\textbf{Key words:} hyperbolic systems on networks, 1-D hyperbolic systems, graph realizability, line graphs, adjacency matrix.\\
\textbf{MSC:} 35R02, 35F46, 05C50, 05C90.
\end{abstract}
 \section{Introduction}

 In this paper we are concerned with systems of linear hyperbolic equations on a bounded interval, say, $(0,1)$, sometimes referred to as port-Hamiltonians, \cite{JaMoZw} and the role is played by the boundary conditions coupling the incoming and outgoing Riemann invariants determined by the system at the endpoints at $x=0$ and $x=1$,
 \begin{subequations}\label{syst1}
 \begin{equation}
 \p_t\binom{\mb{\ups}}{\mb{\w}} = \left(\begin{array}{cc}-\mc C_+&0\\0&\mc C_-\end{array}\right)\binom{\mb{\ups}}{\mb{\w}},\quad 0<x<1, t>0,\label{syst1a}
 \end{equation}
 \begin{equation}
 \mb \Xi (\mb{\ups}(0,t),\mb{\w}(1,t),\mb{\ups}(1,t),\mb{\w}(0,t))^T=0,\quad t>0,\label{syst1b}
 \end{equation}
 \begin{equation}
 \mb{\ups}(x,0)= \mr{\mb\ups}(x),\; \mb{\w}(x,0)= \mr{\mb\w}(x)\quad 0<x<1,\label{syst1c}
 \end{equation}
 \end{subequations}
  where $\mb\ups$ and $\mb\w$ are the Riemann invariants flowing, respectively, from $0$ to $1$ and from $1$ to $0$, $\mc C_+$ and $\mc C_-$ are $m_+\times m_+$ and  $m_-\times m_-$ diagonal matrices with positive entries and, with $2m=m_++m_-$, $\mb \Xi$ is an $2m\times 4m$ matrix relating outgoing  $\mb{\ups}(0),\mb{\w}(1)$ and incoming $\mb{\ups}(1),\mb{\w}(0)$ boundary values so that \eqref{syst1b} can be written as
  \begin{equation}
  \mb \Xi_{out} (\mb{\ups}(0,t),\mb{\w}(1,t))^T + \mb\Xi_{in}(\mb{\ups}(1,t),\mb{\w}(0,t))^T=0,\quad t>0.\label{syst1b1}
 \end{equation}
   An important class of such problems arises from dynamical systems on metric graphs.  Let $\Gamma$ be a graph with $r$ vertices $\{\bv_j\}_{1\leq j\leq r}=:\Upsilon$ and $m$ edges $\{\mb e_j\}_{1\leq j\leq m}$ (identified with $(0,1)$ through a suitable parametrization).  The dynamics on each edge $\mb e_j$ is described by
 \begin{equation}
 \p_t \mb p^j+ \mc M^j\p_x\mb p^j  =0, \quad t>0, 0<x<1, 1\leq j\leq m,
 \label{sys1}
 \end{equation}
 where $\mb p^j = (p^j_1, p^j_2)^T$ and $\mc M^j = (M^j_{lk})_{1\leq k,l\leq 2}$ are defined on $[0,1]$ and $\mc M^j(x)$ is a strictly hyperbolic real matrix for each $x\in [0,1]$ and $1\leq j\leq m$. System \eqref{sys1} is complemented with initial conditions and  suitable transmission conditions coupling the values of $\mb p^j$ at the vertices which the edges $\mb e_j$ are incident to. Then \eqref{syst1} can be obtained from \eqref{sys1} by diagonalization so that (suitably re-indexed) $\mb \ups$ and $\mb \w$ are the Riemann invariants of $\mb p = (\mb p^j)_{1\leq j\leq m}$.

 Such problems have been a subject of intensive research, both from the dynamics on graphs,  \cite{AM2, DKNR, BN, BFN3, KMN, Kuch, DM}, and the 1-D hyperbolic systems, \cite{BaCor, Zwart2010, JaZwbook, JaMoZw}, points of view.  However, there is hardly any overlap,  as there seems to be little interest in the network interpretation of the results in the latter, while in the former the conditions on the Riemann invariants seems to be ''difficult to adapt to the case of a network'', \cite[Section 3]{KMN}.

 The main aim of this paper, as well as of the preceding one, \cite{JBAB1}, is to bring together these two approaches.  In \cite{JBAB1} we have provided explicit formulae allowing for a systematic conversion of Kirchhoff's type network transmission conditions to \eqref{syst1b} in such a way that the resulting system \eqref{syst1} is well-posed. We also gave a proof of the well-posedness in any $L_p$, $1\leq p<\infty$, which, in contrast to \cite{Zwart2010}, is purely semigroup theoretic. For notational clarity, we focused on  $2\times 2$ hyperbolic systems on each edge but the method works equally well for systems of arbitrary (finite) dimension.   In this paper we are concerned with the reverse question, that is, to determine under what assumptions on $\mb \Xi$,  \eqref{syst1} describes a network dynamics given by $2\times 2$ hyperbolic systems on each edge, coupled by Kirchhoff's transmission conditions at incident vertices.

To briefly describe the content of the paper, we observe that if the matrix $\mb\Xi = \{\xi_{ij}\}_{1\leq i\leq 2m, 1\leq j\leq 4m}$ in \eqref{syst1b} describes transmission conditions at the vertices of a graph, say $\Gamma,$ on whose edges we have $2\times 2$ systems of hyperbolic equations, then we should be able to group the indices $j$ into pairs $\{j',j''\}$ corresponding to the edges of $\Gamma$ on which we have  $2\times 2$ systems for the components $j'$ and $j''$ of $(\mb\ups,\mb\w).$  Thus, in a sense, the columns of $\mb\Xi$ determine the edges of $\Gamma.$ It follows that it is easier to split the reconstruction of $\Gamma$ into two steps and first build a digraph $\mb \Gamma,$ where each column index $j$ of $\mb\Xi$ is associated with an arc, say $\me^j,$ on which we have a first order system for either $\ups_j$ or $\w_j.$  Thus, the main problem is to construct vertices of $\Gamma$ (and $\mb \Gamma$) which should be somehow determined by a partition of the row indices of $\mb\Xi$. To do this, we observe that the coefficients of $\mb\Xi$ represent a map of connections of the edges in the sense that, roughly speaking, if $\xi_{ij}\neq 0$ and $\xi_{ik}\neq 0$, then arcs $\me^j$ and $\me^k$ are incident to the same vertex and, if they are incoming to it, then they cannot be incoming to any other vertex. A difficulty here is that  while for the flow to occur from, say, $\me^j$ to $\me^k,$ these arcs must be incident to the same vertex but the converse may not hold, that is, for $\me^j$ incoming to and $\me^k$ outgoing from the same $\mb v,$ the flow from $\me^j$ may not enter $\me^k$ but go to other outgoing arcs. To avoid such a case, in this paper we  formulate conditions ensuring that the flow connectivity at each vertex is the same as the graph connectivity. This assumption yields a relatively simple criterion for the reconstruction of $\mb \Gamma,$ which is that $\widehat {\left(\widehat{\mb \Xi_{out}}\right)^T\widehat{\mb \Xi_{in}}}$ is the adjacency matrix of a line graph (where for a matrix $\mc A$, $\widehat{\mc A}$ is obtained by replacing non-zero entries of $\mc A$ by 1.) This, together with some technical assumptions, allows us to apply the theory of \cite{HemBein}, see also \cite[Theorem 4.5.1]{Bang},  to construct first $\mb \Gamma$ and then $\Gamma$ in such a way that \eqref{syst1b1} can be localized at each vertex of $\Gamma$ in a way which is consistent with \eqref{syst1a}.

The main idea of this paper is similar to that of \cite{BF1}.  However, \cite{BF1} dealt with first order problems with \eqref{syst1b1} solved with respect to the outgoing data. Here, we do not make this assumption and, while \eqref{syst1} technically is one-dimensional, having reconstructed $\mb \Gamma,$ we still have to glue together its  pairs of arcs to obtain the edges of $\Gamma$ in such a way that the corresponding pairs of solutions of \eqref{syst1a} are Riemann invariants of $2\times 2$ systems on $\Gamma$. Another difficulty in the current setting is  potential presence of sources and sinks in $\Gamma.$ Their structure is not reflected in the line graph, \cite{BF1}, and reconstructing them in a way consistent with a system of  $2\times 2$ equations on $\Gamma$ is technically involved.

The paper is organized as follows. In Section 2 we briefly recall the notation and relevant results from \cite{JBAB1}. Section 3 contains the main result of the paper. In Appendix we recall basic results on line graphs in the interpretation suitable for the considerations of the paper.

 \section{Notation, definitions and earlier results}

   We consider a network represented by a finite, connected and simple (without loops and multiple edges) metric graph $\Gamma$ with $r$ vertices $\{\bv_j\}_{1\leq j\leq r}=:\Upsilon$ and $m$ edges $\{\mb e_j\}_{1\leq j\leq m}$. We denote by $E_{\bv}$ the set of edges incident to $\bv,$ let $J_{\bv} =\{j;\; \mb e_j \in E_{\bv}\} $  and  $|E_{\bv}| =|J_{\bv}|$ be the valency of $\bv$. We identify the edges with unit intervals through sufficiently smooth invertible functions $l_j: \mb e_j \mapsto [0,1]$. In particular, we call $\bv$ with $l_j(\bv) =0$ the tail of $\mb e_j$ and the head if $l_j(\bv)=1$.  On each edge $\mb e_j$ we consider system \eqref{sys1}. Let $\la^j_-<\la^j_+$ be the eigenvalues of $\mc M^j, 1\leq j\leq m$ (the strict inequality is justified by the strict hyperbolicity of $\mc M^j$).  The eigenvalues can be of the same sign as well as of different signs. In the latter case, we have $\la^j_-<0<\la^j_+$. By $ f^j_\pm = (f^j_{\pm,1}, f^j_{\pm,2})^T$ we denote the eigenvectors corresponding to, respectively, $\la^j_\pm$ and by $$
 \mc F^j = \left(\begin{array}{cc} f^j_{+,1}&f^j_{-,1}\\
   f^j_{+,2}&f^j_{-,2} \end{array}\right),
   $$
   the diagonalizing matrix on each edge. The Riemann invariants $\mb u^j = (u^j_1,u^j_2)^T, 1\leq j\leq m,$ are defined by
 \begin{equation}
 \mb u^j = (\mc F^{j})^{-1} \mb p^j\quad\text{and}\quad
 \mb p^j = \binom{f^j_{+,1} u^j_1 + f^j_{-,1}u^j_2}{f^j_{+,2}u^j_1 + f^j_{-,2}u^j_2}.\label{pguw}
 \end{equation}
 Then we diagonalize \eqref{sys1} and, discarding lower order terms, we  consider
  \begin{equation}
 \p_t\mb u^j =  \mc L^j\p_x\mb{u}^j = \left(\begin{array}{cc}-\la^j_+&0\\0&-\la^j_- \end{array}\right)\p_x\mb{u}^j,\label{sysdiag}
 \end{equation}
 for each $1\leq j\leq m$.

\subsection{The boundary conditions}

The most general linear local boundary conditions at $\mb v\in \Upsilon$ are given by
\begin{equation}
  \mb \Phi_{\bv} \mb p(\bv) = 0,
  \label{bc2s}
  \end{equation}
where $\mb p = ((p_1^j,p_2^j)_{1\leq j\leq m})^T$ and the real matrix $\mb \Phi_{\bv}$ is given by
\begin{equation}
\mb \Phi_{\bv} := \left(\begin{array}{ccccc}\phi^{j_1}_{\bv,1}&\vhi^{j_1}_{\bv,1}&\ldots&\phi^{j_{|J_{\bv}|}}_{\bv,1}&\vhi^{j_{|J_{\bv}|}}_{\bv,1}\\
\vdots&\vdots&\vdots&\vdots&\vdots\\
\phi^{j_1}_{\bv,k_{\bv}}&\vhi^{j_1}_{\bv,k_{\bv}}&\ldots&\phi^{j_{|J_{\bv}|}}_{\bv,k_{\bv}}&\vhi^{j_{|J_{\bv}|}}_{\bv,k_{\bv}}\end{array}\right),\
\label{Phiv}
\end{equation}
where $J_{\bv} = \{j_1, \ldots, j_{|J_{\bv}|}\}$ and $k_{\mb v}$ is a parameter determined by the problem. The difficulty with such a formulation is that it is not immediately clear what properties $\mb \Phi_{\bv}$ should have to ensure well-posedness of the hyperbolic problem for which \eqref{bc2s}, $\bv\in \Upsilon$, serve as boundary conditions. There are various ways around this difficulty. In e.g. \cite{Nic, KMN},  conditions are imposed directly on $\mb \Phi_v$ to ensure specific properties, such as dissipativity, of the resulting initial boundary value problem. We, however, follow the paradigm introduced in \cite[Section 1.1.5.1]{BaCor} and require that at each vertex all outgoing data must be determined by the incoming data. Since for a general system  \eqref{sys1} it is not always obvious which data are outgoing and which are incoming at a vertex, we write \eqref{bc2s} in the equivalent form using the Riemann invariants $\mb u = \mc F^{-1} \mb p$, as
\begin{equation}
\mb \Psi_{\bv} \mb u(\bv) : = \mb \Phi_{\bv} \mc F(\bv)\mb u(\bv)  = 0.
  \label{bc2uw}
  \end{equation}
For Riemann invariants, we can define their outgoing values at $\bv$ as follows.
\begin{definition} Let $\mb v\in \Upsilon$. The following values $u^j_k(\mb v), j\in J_{\bv}, k=1,2,$ are outgoing at $\bv,$
 \begin{center}
 \begin{tabular} {|c|c|c|c|}
 \hline
 If &$\la^j_+>\la^j_->0$& $\la^j_+>0>\la^j_-$&$0>\la^j_+>\la^j_-$\\
 \hline
 $l_j(\bv) =0$& $u^j_1(\bv), u^j_2(\bv)$ & $u^j_1(\bv)$& none\\
 \hline
 $l_j(\bv) =1$&none &$u^j_2(\bv)$&$u^j_1(\bv), u^j_2(\bv)$ \\
 \hline
 \end{tabular}\;.
 \end{center}
  \label{tab1}
  \end{definition}
Denote by $\alpha_j$ the number of positive eigenvalues on $\mb e_j$. Then we see that for a given vertex $\bv$ with valence $|J_{\bv}|$ the number of outgoing values is given by
 \begin{equation}
 k_{\bv} := \sum\limits_{j\in J_{\bv}} (2(1-\alpha_j)l_j(\bv) +\alpha_j ).
 \label{kv1}
 \end{equation}

  \begin{definition} We say that $\bv$ is a \textit{sink}, and write $\bv\in \Upsilon_z$, if either $\alpha_j =2$ and $l_j(\bv) =1$ or $\alpha_j =0$ and $l_j(\bv) =0$ for all $j \in J_{\bv}$.  We say that $\bv$ is a \textit{source}, and write $\bv\in \Upsilon_s$, if either $\alpha_j =0$ and $l_j(\bv) =1$ or $\alpha_j =2$ and $l_j(\bv) =0$ for all $j \in J_{\bv}$. If $\bv$ is neither a source nor a sink, then we say that $\bv$ is a \textit{transient} (or \textit{internal}) vertex and write $\bv \in \Upsilon_t$.
\end{definition}
We observe that if $\bv\in \Upsilon_z$, then  $k_{\bv}=0$ (so that no boundary conditions are imposed at a sink), while  if $\bv\in \Upsilon_s$, then  $k_{\bv}=2|J_{\bv}|$.

A typical example of \eqref{bc2uw} is Kirchhoff's law that requires that the total inflow rate into a vertex must equal the total outflow rate from it. Since it provides only one equation, in general it is not sufficient to ensure the well-posedness of the problem. So, we introduce the following definition.
\begin{definition}
We say that $\mb p$ satisfies a generalized Kirchhoff conditions at $\bv \in \Upsilon\setminus\Upsilon_z$ if, for $\mb u = \mc F^{-1} \mb p,$
  \eqref{bc2uw} is satisfied for some matrix $\mb\Phi_v= \mb\Psi_v\mc F^{-1}$ with $k_{\bv}$ given by \eqref{kv1}.
  \end{definition}

 To realize the requirement that the outgoing values should be determined by the incoming ones, we have to analyze the structure of $\mb\Psi_v$.  Let us  introduce the partition  \begin{equation} \{1,\ldots,m\} = : J_1\cup J_2\cup J_0,\label{Jpart}\end{equation}
 where $j \in J_1$ if $\alpha_j =1, $  $j \in J_2$ if $\alpha_j =2$  and  $j\in J_0$ if $\alpha_j=0$. This partition induces the corresponding partition of each $J_{\bv}$ as
  $$J_{\bv} := J_{\bv,1}\cup J_{\bv,2}\cup J_{\bv,0}.
  $$
 We also consider another partition $J_{\bv} = J_{\bv}^0\cup J_{\bv}^1,$ where $j\in J_{\bv}^0$ if $l_j(\bv) =0$ and $j\in J_{\bv}^1$ if $l_j(\bv) =1$. Then we can give an alternative expression for $k_{\bv}$ as
 \begin{equation}
 k_{\bv} = \sum\limits_{j\in J^0_{\bv}} \alpha_j + \sum\limits_{j\in J^1_{\bv}} (2-\alpha_j) = |J_{\bv,1}|+ 2(|J^0_{\bv}\cap J_{\bv,2}| + |J^1_{\bv}\cap J_{\bv,0}|).
 \label{kval}
 \end{equation}
 Then, by \cite[Lemma 3.6]{JBAB1},
   \begin{description}
             \item (i) $u^j_1(0)$ is outgoing if and only if $j \in (J_{\bv,1}\cup J_{\bv,2})\cap J^0_{\bv},$
               \item (ii) $u^j_2(0)$ is outgoing if and only if $j \in J_{\bv,2}\cap J^0_{\bv},$
               \item (iii) $u^j_1(1)$ is outgoing if and only if $j \in J_{\bv,0}\cap J^1_{\bv},$
               \item (iv) $u^j_2(1)$ is outgoing if and only if $j \in (J_{\bv,1}\cup J_{\bv,0})\cap J^1_{\bv}.$
  \end{description}
     We introduce the block diagonal matrix
        \begin{equation}
    \mc {\ti {F}}_{out}(\bv) = diag\{\mc {\ti {F}}_{out}^j(\bv)\}_{j \in J_{\bv}},
    \label{frakVj}
    \end{equation}
    where
    $$
    \mc {\ti F}_{out}^j(\bv) = \left\{\begin{array} {ccc} \left(\begin{array}{cc}0&0\\0&0\end{array}\right)&\text{if} &j\in (J_{\bv,0} \cap J_{\bv}^0)\cup (J_{\bv,2}\cap J_{\bv}^1),\\
    \left(\begin{array}{cc}f^j_{+,1}(l_j(\bv))&f^j_{-,1}(l_j(\bv))\\f^j_{+,2}(l_j(\bv))&f^j_{-,2}(l_j(\bv))\end{array}\right)&\text{if}& j\in (J_{\bv,0} \cap J_{\bv}^1)\cup (J_{\bv,2}\cap J_{\bv}^0),\\
    \left(\begin{array}{cc}f^j_{+,1}(0)&0\\f^j_{+,2}(0)&0\end{array}\right)&\text{if} &j\in J_{\bv,1} \cap J_{\bv}^0,\\
    \left(\begin{array}{cc}0&f^j_{-,1}(1)\\0&f^j_{-,2}(1)\end{array}\right)&\text{if}& j\in J_{\bv,1} \cap J_{\bv}^1.
    \end{array}
    \right.
        $$
    Further, by $\mc F_{out}(\bv)$ we denote the contraction of $\mc {\ti F}_{out}(\bv)$; that is, the $2|J_{\bv}| \times k_{\bv}$ matrix obtained from $\mc {\ti F}_{out}(\bv)$ by deleting $2|J_{\bv}| - k_{\bv}$ zero columns, and then define $\mc F_{in}(\bv)$ as the analogous contraction of $\mc F(\bv)-\mc{\ti F}_{out}(\bv)$.

        In a similar way, we extract from $ \mb u(\bv)$ the outgoing boundary values $\widetilde{\mb u}_{out}(\bv) =(\widetilde{\mb u}^j_{out}(\bv))_{j\in J_{\mb v}}$ by
    $$
        \widetilde{\mb u}^j_{out}(\bv) = \left\{\begin{array} {ccc} (0,0)^T&\text{if} &j\in (J_{\bv,0} \cap J_{\bv}^0)\cup (J_{\bv,2}\cap J_{\bv}^1),\\
   (u^j_1(l_j(\bv)),u^j_2(l_j(\bv)))^T&\text{if}& j\in (J_{\bv,0} \cap J_{\bv}^1)\cup (J_{\bv,2}\cap J_{\bv}^0),\\
    (u^j_1(0),0)^T&\text{if} &j\in J_{\bv,1} \cap J_{\bv}^0,\\
    (0,u^j_2(1))^T&\text{if}& j\in J_{\bv,1} \cap J_{\bv}^1,
    \end{array}
    \right.
    $$
        and $        \widetilde{\mb u}_{in}(\bv) = {\mb u(\bv)}-\widetilde{\mb u}_{out}(\bv).
        $
As above, we define $\mb u_{out}(\bv)$ to be the vector in $\mbb R^{k_{\bv}}$ obtained by discarding the zero entries in $\widetilde{\mb u}_{out}(\bv)$, as described above and, similarly, $\mb u_{in}(\bv)$ is the vector in $\mbb R^{2|J_{\bv}|-k_{\bv}}$ obtained from $\widetilde{\mb u}_{in}(\bv)$.
    \begin{proposition}\cite[Proposition 3.8]{JBAB1}
     Boundary system \eqref{bc2uw} at $\bv\in \Upsilon\setminus \Upsilon_z$ is equivalent to
    \begin{equation}
 \mb \Phi_{\bv} \mc F_{out}(\bv) \mb u_{out}(\bv)
    + \mb \Phi_{\bv} \mc F_{in}(\bv)\mb u_{in}(\bv) = 0
    \label{bcsplit1}
    \end{equation}
    and hence it  uniquely determines the outgoing values of $\mb u(\bv)$ at $\bv$ as defined by Definition \ref{tab1} if and only if
    \begin{equation}
    \mb\Phi_{\bv} \mc F_{out}(\bv)\quad \text{is\; nonsingular}.
    \label{compsolv}
    \end{equation}
     Then
    \begin{equation} \mb u_{out}(\bv) = - (\mb \Phi_{\bv} \mc F_{out}(\bv))^{-1} \mb \Phi_{\bv} \mc F_{in}(\bv)\mb u_{in}(\bv).
    \label{mcB}
    \end{equation}
     \label{prop1}
  \end{proposition}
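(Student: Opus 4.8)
The plan is to reduce the full boundary relation \eqref{bc2uw} to the split form \eqref{bcsplit1} by an exact block-diagonal bookkeeping, and then to read off the second assertion from a square-matrix invertibility argument. First I would record the two decompositions that are built into the construction: on the matrix side $\mc F(\bv) = \mc{\ti F}_{out}(\bv) + (\mc F(\bv)-\mc{\ti F}_{out}(\bv))$, and on the vector side, by definition of $\widetilde{\mb u}_{in}(\bv)$, the splitting $\mb u(\bv) = \widetilde{\mb u}_{out}(\bv) + \widetilde{\mb u}_{in}(\bv)$. Substituting both into $\mb \Psi_{\bv}\mb u(\bv) = \mb \Phi_{\bv}\mc F(\bv)\mb u(\bv)$ produces four terms, and the heart of the argument is to show that the two ``cross'' terms $\mc{\ti F}_{out}(\bv)\widetilde{\mb u}_{in}(\bv)$ and $(\mc F(\bv)-\mc{\ti F}_{out}(\bv))\widetilde{\mb u}_{out}(\bv)$ both vanish.

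To see this I would go through the four cases in the definition of $\mc{\ti F}_{out}^j(\bv)$ simultaneously with the matching four cases in the definition of $\widetilde{\mb u}^j_{out}(\bv)$. In each case the nonzero columns of the block $\mc{\ti F}_{out}^j(\bv)$ sit in exactly the positions $k\in\{1,2\}$ in which the entries of $\widetilde{\mb u}^j_{out}(\bv)$ are retained, namely the positions declared outgoing in Definition \ref{tab1} (this is precisely items (i)--(iv) above). Consequently the nonzero columns of the complementary block $\mc F^j(\bv)-\mc{\ti F}_{out}^j(\bv)$ occupy the positions where $\widetilde{\mb u}^j_{out}(\bv)$ vanishes, so $(\mc F^j(\bv)-\mc{\ti F}_{out}^j(\bv))\widetilde{\mb u}^j_{out}(\bv)=0$, and symmetrically $\mc{\ti F}_{out}^j(\bv)\widetilde{\mb u}^j_{in}(\bv)=0$; summing over $j\in J_{\bv}$ kills both cross terms, leaving $\mc F(\bv)\mb u(\bv) = \mc{\ti F}_{out}(\bv)\widetilde{\mb u}_{out}(\bv) + (\mc F(\bv)-\mc{\ti F}_{out}(\bv))\widetilde{\mb u}_{in}(\bv)$. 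Since the deleted zero columns of $\mc{\ti F}_{out}(\bv)$ occupy the same (incoming) positions as the discarded entries of $\widetilde{\mb u}_{out}(\bv)$, and likewise for the complementary ``in'' objects, passing to the contractions changes neither product, i.e. $\mc{\ti F}_{out}(\bv)\widetilde{\mb u}_{out}(\bv) = \mc F_{out}(\bv)\mb u_{out}(\bv)$ and $(\mc F(\bv)-\mc{\ti F}_{out}(\bv))\widetilde{\mb u}_{in}(\bv)= \mc F_{in}(\bv)\mb u_{in}(\bv)$. Left-multiplying by $\mb \Phi_{\bv}$ then yields \eqref{bcsplit1}, establishing the asserted equivalence.

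For the second assertion I would start from the dimension count: by \eqref{Phiv} the matrix $\mb \Phi_{\bv}$ has $k_{\bv}$ rows and $2|J_{\bv}|$ columns, while $\mc F_{out}(\bv)$ is $2|J_{\bv}|\times k_{\bv}$, so $\mb \Phi_{\bv}\mc F_{out}(\bv)$ is a square $k_{\bv}\times k_{\bv}$ matrix acting on $\mb u_{out}(\bv)\in\mbb R^{k_{\bv}}$. Rewriting \eqref{bcsplit1} as $\mb \Phi_{\bv}\mc F_{out}(\bv)\,\mb u_{out}(\bv) = -\mb \Phi_{\bv}\mc F_{in}(\bv)\,\mb u_{in}(\bv)$, the outgoing data are uniquely determined by every choice of incoming data exactly when $\mb \Phi_{\bv}\mc F_{out}(\bv)$ is nonsingular: if it is, inversion gives existence and uniqueness for all $\mb u_{in}(\bv)$ and produces the formula \eqref{mcB}; if it is singular, then already $\mb u_{in}(\bv)=0$ admits the nonzero solutions $\mb u_{out}(\bv)\in\ker(\mb \Phi_{\bv}\mc F_{out}(\bv))$, so uniqueness fails. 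This proves \eqref{compsolv} and \eqref{mcB}.

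I expect the main obstacle to be organizational rather than conceptual: carefully checking that the four-way case split defining $\mc{\ti F}_{out}^j(\bv)$ aligns column-by-column with the corresponding split defining $\widetilde{\mb u}^j_{out}(\bv)$, so that the cross terms genuinely vanish and the contraction merely deletes matched zero columns and entries. One should also confirm that the number of retained columns equals $k_{\bv}$ in each case, which is exactly \eqref{kval}, guaranteeing that $\mb \Phi_{\bv}\mc F_{out}(\bv)$ is square; here the exclusion of sinks ($\bv\notin\Upsilon_z$, so that $k_{\bv}>0$) is what makes the statement meaningful.
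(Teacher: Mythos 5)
Your proof is correct and is essentially the intended argument: the paper itself only cites this result from \cite[Proposition 3.8]{JBAB1}, and the definitions of $\mc{\ti F}_{out}(\bv)$ and $\widetilde{\mb u}_{out}(\bv)$ are set up precisely so that the two cross terms vanish case by case, the contractions delete matched zero columns and entries, and the $k_{\bv}\times k_{\bv}$ square system yields the nonsingularity criterion and formula \eqref{mcB}. No gaps.
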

To pass from \eqref{sys1} with Kirchhoff's boundary conditions at each vertex $\bv \in \Upsilon\setminus \Upsilon_z$ to \eqref{syst1} we have to write the former in a global form.  Assuming the vertices in $\Upsilon\setminus\Upsilon_z$ are ordered as $\{\bv_1, \ldots, \bv_{r'}\}$, we  define $\mb \Psi' = diag \{\mb \Psi_{\bv} \}_{\bv\in \Upsilon\setminus\Upsilon_z}$ and $\gamma \mb u  = ((\mb u(\bv))_{\bv \in \Upsilon\setminus\Upsilon_z})^T$ and write \eqref{bc2uw} as
  \begin{equation}
  \mb \Psi' \gamma \mb u = 0.
  \label{bc2uwgl}
  \end{equation}
    We note that the function values that are incoming at $\bv\in \Upsilon_z$ do not influence any outgoing data. To keep, however, the track of all vertex values, we augment  $\mb \Psi'$ with zero columns corresponding to edges coming to sinks and denote such an augmented matrix by $\mb\Psi$. Since, by the hand shake lemma, we have $  2\sum_{\bv\in \Upsilon} |J_{\bv}| = 4m$ and, by \cite[Section 3.2]{JBAB1},
    $\sum_{\bv\in \Upsilon\setminus\Upsilon_z} k_{\bv} = 2m,$ $\mb\Psi$ is a $2m\times 4m$ matrix. In the same way, we can provide a global form of \eqref{bcsplit1}, splitting  \eqref{bc2uwgl} as
    \begin{equation}
 \mb \Psi^{out} \gamma \mb u_{out}
    + \mb \Psi^{in}\gamma\mb u_{in} = 0,
    \label{bcsplit2}
    \end{equation}
where $ \mb \Psi^{out} = diag \{\mb \Phi_{\bv} \mc F_{out}(\bv) \}_{\bv\in \Upsilon\setminus\Upsilon_z}$ and
 $\mb \Psi^{in} = diag \{\mb \Phi_{\bv} \mc F_{in}(\bv) \}_{\bv\in \Upsilon\setminus\Upsilon_z},$ augmented by zero columns corresponding to the incoming functions at the sinks, $\gamma \mb u_{out} := ((\mb u_{out}(\bv))_{\bv \in \Upsilon\setminus\Upsilon_z})^T$, and $\gamma \mb u_{in}$ is $((\mb u_{in}(\bv))_{\bv \in \Upsilon\setminus\Upsilon_z})^T$ augmented by incoming values at the sinks.

 Using the adopted parametrization and the formalism of Definition \ref{tab1}, we only need to distinguish between functions describing the flow from $0$ to $1$ and from $1$ to $0$.
 Accordingly, we group the Riemann invariants $\mb u$  into parts corresponding to positive and negative eigenvalues and rename them as:
 \begin{equation}
\begin{split}
\mb {\upsilon}  &:=\left((u^j_1)_{j\in J_1\cup J_2},(u^j_2)_{j\in J_2}\right) = (\ups_j)_{j\in J^+},\\
\mb {\w}& := \left((u^j_1)_{j\in J_0}, (u^j_2)_{j\in J_1\cup J_0}\right) = (\w_j)_{j\in J^-},
\end{split}
\label{renum}
\end{equation}
where $J^+$  and $J^-$  are the sets of indices $j$ with, respectively, at least 1 positive eigenvalue, and at least 1 negative eigenvalue of $\mc M^j$. In  $J^+$ (respectively $J^-$) the indices from $J_2$ (respectively $J_0$) appear twice so that we renumber them in some consistent way to avoid confusion. For instance, we can take  $J^+ =\{1,\ldots, m^u,m^u+1,\ldots, m^+\}$ and $J^- =\{m^++1,\ldots,m_u, m_u+1,\ldots, 2m\}$ and there are bijections between, respectively, $J_1\cup J_2$ and $\{1,\ldots, m^u\}$,
$J_2$ and $\{m^u+1,\ldots, m^+\}$, $J_0$ and $\{m^++1,\ldots,m_u\}$, and $J_1\cup J_0$ and $\{m_u+1,\ldots, 2m\}.$
We emphasize that such a renumbering is largely arbitrary and different ways of doing it result in just re-labelling of the components of \eqref{syst1} without changing its structure.

In this way, we converted $\Gamma$ into a multi digraph $\mb \Gamma$ with the same vertices $\Upsilon$, in such a way that each edge of $\Gamma$ was split into two arcs parametrized by $x\in [0,1],$ where $x=0$ on each arc corresponds  to the same vertex in $\Gamma$ and the same is valid for $x=1$. Conversely, if we have a multi digraph $\mb \Gamma,$ where all edges appear in pairs and  each two edges joining the same vertex are  parametrized concurrently, then we can collapse $\mb{\Gamma}$ to a graph $\Gamma$.

Using this construction,  the second order hyperbolic problem \eqref{sys1}, \eqref{bcsplit2} was transformed into first order system \eqref{syst1} with \eqref{bcsplit2} written in the form \eqref{syst1b1}. It is clear, however, that \eqref{syst1} can be formulated with an arbitrary matrix $\mb\Xi$. Thus, we arrive at the main problem considered in this paper, how to characterize matrices $\mb\Xi$ that arise from $\mb \Psi$ so that \eqref{syst1} describes a network dynamics.

\section{Graph realizability of port-Hamiltonians}
\subsection{Connectivity at a vertex}
For a graph $\Gamma$, let us consider the multi digraph $\mb\Gamma$ constructed above. The sets of vertices $\Upsilon$ are the same for $\Gamma$ and $\mb\Gamma$. For $\bv \in\Upsilon$ of $\mb{\Gamma},$ we can talk about incoming and outgoing arcs which are determined  by $j\in J_{\bv},$ $l_j(\bv)$ and the signs of $\la_+^j$ and $\la_-^j$, as in Definition \ref{tab1}. We denote by $\mb J_{\bv}^+$ and $\mb J_{\bv}^-$ the (ordered) sets of indices of arcs $\me^j$ incoming and, respectively, outgoing from $\bv$ in $\mb{\Gamma}$. We note that $|\mb J_{\bv}^-|=k_{\bv}$, the number of the outgoing conditions.  With this notation, the matrix $\mb{\Psi_{\bv}}$ can be split into two matrices
$$
\mb{\Psi}_{\bv}^{out}=(\psi_{\bv,i}^j)_{1\leq i\leq k_{\bv},j\in \mb J_{\bv}^-}, \qquad \mb{\Psi}_{\bv}^{in}=(\psi_{\bv,i}^j)_{1\leq i\leq k_{\bv},j\in \mb J_{\bv}^+}.
$$
Since no outgoing value should be missing, we  assume
\begin{equation}
\text{no column or row of $\mb\Psi_{\bv}^{out}$ is identically zero.}\label{ass1}
\end{equation}
These matrices provide some insight into how the arcs are connected by the flow which is an additional feature, superimposed on the geometric structure of the incoming and outgoing arcs at the vertex. In principle, these two structures do not have to be the same, that is, it may happen that the substance flowing from $\me^j$,  $j \in \mb J_{\bv}^+,$ is only directed to some of the outgoing arcs. An extreme case of such a situation is when  both $\mb{\Psi}_{\bv}^{out}$ and $\mb{\Psi}_{\bv}^{in}$ are completely decomposable, see \cite{BruHaMi}, with blocks in both matrices having the same row indices. Then, from the flow point of view, $\bv$ can be regarded as several nodes of the flow network, which are not linked with each other.  Such cases, where the geometric structure at a vertex is inconsistent with the flow structure, may generate problems in determining the graph  underlying transport problems. Thus in this paper we adopt assumptions ensuring that  the map of the flow connections given by the matrices $\mb{\Psi}_{\bv}^{out}$ and $\mb{\Psi}_{\bv}^{in}$ coincides with the geometry at $\bv$. We begin with the necessary definitions.

Consider first a transient vertex $\bv$.
We say that the arc $\me^j$, $j\in \mb J^+_{\bv},$ \textit{flow connects} to $\me^l, l \in \mb J^-_{\bv},$  if $\psi_{\bv,i}^j \neq 0$ and $\psi_{\bv,i}^l\neq 0$ for some $1\leq i\leq k_{\bv}.$ Using this idea, we construct a connectivity matrix $\mathsf C_{\bv} = (\msf c_{\bv,lj})_{l \in \mb J^-_{\bv}, j \in \mb J^+_{\bv}},$ where
$$
\msf c_{\bv,lj} = \left\{\begin{array}{lcl} 1&\text{if}& \me^j\;\text{flow\;connects\;to}\;\me^l,\\
0&&\text{otherwise}.
\end{array}
\right.
$$
\begin{remark}
We observe that the above definition implies that  for $\me^j$ and $\me^l$ to be flow connected, $\me^j$ and $\me^l$ must be incident to the same vertex.
\end{remark}
\begin{remark} We observe that $\mathsf C_{\bv}$ can be interpreted as the adjacency matrix of the bipartite line digraph constructed from the incoming and outgoing arcs at $\bv,$ where the connections between arcs are defined by flow connections, see \cite{BruHaMi}. \end{remark}
For an arbitrary matrix $A = (a_{ij})_{1\leq i\leq p, 1\leq j\leq q},$ by $\widehat A = (\hat a_{ij})_{1\leq i\leq p, 1\leq j\leq q}$ we denote the matrix with every nonzero entry of $A$ replaced by 1.
\begin{lemma}\label{lemmbbc1}
If $\bv$ is a transient vertex,
\begin{equation}
\msf C_{\bv} = \widehat{\left(\widehat{\mb{\Psi}^{out}_{\bv}}\right)^T   \widehat{\mb{\Psi}^{in}_{\bv}}}.
\label{mbbc1}
\end{equation}
\end{lemma}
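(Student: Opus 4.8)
The plan is to prove the matrix identity entrywise, by tracking the $(l,j)$ coefficient through the two operations defining the right-hand side. Fix $l \in \mb J^-_{\bv}$ and $j \in \mb J^+_{\bv}$. Since $\widehat{\mb{\Psi}^{out}_{\bv}}$ carries the entry $\widehat{\psi_{\bv,i}^l}$ in its $i$-th row and $l$-th column, its transpose $\left(\widehat{\mb{\Psi}^{out}_{\bv}}\right)^T$ carries $\widehat{\psi_{\bv,i}^l}$ in position $(l,i)$; likewise $\widehat{\mb{\Psi}^{in}_{\bv}}$ carries $\widehat{\psi_{\bv,i}^j}$ in position $(i,j)$. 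Multiplying, the $(l,j)$ entry of $\left(\widehat{\mb{\Psi}^{out}_{\bv}}\right)^T \widehat{\mb{\Psi}^{in}_{\bv}}$ is therefore
\[
\sum_{i=1}^{k_{\bv}} \widehat{\psi_{\bv,i}^l}\,\widehat{\psi_{\bv,i}^j},
\]
where each factor $\widehat{\psi_{\bv,i}^l}$ and $\widehat{\psi_{\bv,i}^j}$ is either $0$ or $1$.

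The crucial observation -- and the reason the hat is applied to $\mb{\Psi}^{out}_{\bv}$ and $\mb{\Psi}^{in}_{\bv}$ \emph{before} multiplying rather than after -- is that every summand above is a product of two nonnegative integers, hence itself nonnegative. Consequently no cancellation can occur, and the sum vanishes if and only if every summand vanishes, that is, if and only if there is no index $i$ with both $\widehat{\psi_{\bv,i}^l} = 1$ and $\widehat{\psi_{\bv,i}^j} = 1$. Equivalently, the sum is strictly positive precisely when there exists some $1\leq i\leq k_{\bv}$ with $\psi_{\bv,i}^l \neq 0$ and $\psi_{\bv,i}^j \neq 0$. Had one instead formed $\left(\mb{\Psi}^{out}_{\bv}\right)^T \mb{\Psi}^{in}_{\bv}$ from the original real entries, accidental cancellations could make a genuine flow connection register as a zero coefficient, which is exactly why the preliminary hats are indispensable.

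Applying the outer hat replaces the above sum by $1$ precisely when it is nonzero, so the $(l,j)$ entry of the right-hand side equals $1$ if and only if $\psi_{\bv,i}^l \neq 0$ and $\psi_{\bv,i}^j \neq 0$ for some $i$. By the definition of flow connection at a transient vertex, this is exactly the condition that $\me^j$ flow connects to $\me^l$, that is, the value $\msf c_{\bv,lj}$. Since $l$ and $j$ were arbitrary, the two matrices agree entrywise, which is the claim. I do not expect a genuine obstacle here: the statement is a bookkeeping identity, and the only point requiring care is the nonnegativity argument ensuring that the product of the $0/1$ matrices detects the \emph{existence} of a common row index with both entries nonzero, rather than counting it with possible sign cancellation.
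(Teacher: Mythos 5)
Your proof is correct and follows essentially the same route as the paper's: both compute the $(l,j)$ entry of $\left(\widehat{\mb{\Psi}^{out}_{\bv}}\right)^T\widehat{\mb{\Psi}^{in}_{\bv}}$ as $\sum_{i}\hat{\psi}_{\bv,i}^{l}\hat{\psi}_{\bv,i}^{j}$ and observe that this vanishes if and only if no row index $i$ has both entries nonzero, which is precisely the negation of flow connection. Your additional remark on why the hats must be applied before multiplying (to preclude sign cancellation) is a worthwhile clarification but does not change the argument.
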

\begin{proof} Denote  $\msf B =\widehat{\left(\widehat{\mb{\Psi}^{out}_{\bv}}\right)^T   \widehat{\mb{\Psi}^{in}_{\bv}}}.$ Then $\msf b_{ij} =1, i\in \mb J_{\bv}^-, j \in \mb J_{\bv}^+$ if and only if
$$
\sum\limits_{r=1}^{k_{\bv}} \hat{\psi}_{\bv,r}^i\hat{\psi}^j_{\bv,r} \neq 0.
$$
This occurs if and only if there is $r=1,\ldots,k_{\bv}$ such that both $\hat{\psi}_{\bv,r}^i\neq 0$ and $\hat{\psi}^j_{\bv,r} \neq 0,$ which is equivalent to $\me^j$  flow connecting with $\me^i$, that is, $\msf c_{\bv,ij} =1$.
\end{proof}
Let $\bv$ be a source (as we do not impose boundary conditions on sinks). As above, we need to ensure that the flow from a source cannot be split into several isolated subflows. Though here we do not have inflows and outflows, we use a similar idea and say that $\me^i$ and $\me^j$, $i,j \in \mb J_{\bv}^-,$ are \textit{flow connected} if there is $l \in\{1,\ldots, k_{\bv}\}$ such that $\psi_{\bv,l}^i\neq 0$ and $\psi_{\bv,l}^j\neq 0.$
As before, we construct a connectivity matrix $\mathsf C_{\bv} = (\msf c_{\bv,ij})_{i,j \in \mb J^-_{\bv}},$ where
\begin{equation}
\msf c_{\bv,ij} = \left\{\begin{array}{lcl} 1&\text{if}& \me^j\;\text{and}\; \me^i\;\text{are\;flow\;connected},\\
0&&\text{otherwise}.
\end{array}
\right.\label{flcons}
\end{equation}
Note that, contrary to the internal vertex, the connectivity matrix is symmetric. We also do not stipulate that $i\neq j$ so that $\me^j$ is always flow connected to itself and hence, by \eqref{ass1}, each entry of the diagonal of  $\msf C_{\bv}$ is 1.   We have similarly
\begin{lemma}
If $\bv$ is a source,
\begin{equation}
\msf C_{\bv} = \widehat{\left(\widehat{\mb{\Psi}^{out}_{\bv}}\right)^T   \widehat{\mb{\Psi}^{out}_{\bv}}}.
\label{mbbc2}
\end{equation}
\label{lemmbbc2}
\end{lemma}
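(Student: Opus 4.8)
The plan is to mirror the proof of Lemma \ref{lemmbbc1}, the essential difference being that at a source every arc is outgoing, so $\mb J^+_{\bv}$ is empty, $\mb\Psi^{in}_{\bv}$ is absent, and $\mb\Psi^{out}_{\bv}=\mb\Psi_{\bv}$ is the square $k_{\bv}\times k_{\bv}$ matrix indexed by $1\leq i\leq k_{\bv}$ and $j\in\mb J^-_{\bv}$ (recall $|\mb J^-_{\bv}|=k_{\bv}$). Consequently both factors of the product are $\widehat{\mb\Psi^{out}_{\bv}}$. I would set $\msf B=\widehat{\left(\widehat{\mb\Psi^{out}_{\bv}}\right)^T\widehat{\mb\Psi^{out}_{\bv}}}$ and compute its $(i,j)$ entry directly from the definition of matrix multiplication, for $i,j\in\mb J^-_{\bv}$.

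First I would write the $(i,j)$ entry of the inner product $\left(\widehat{\mb\Psi^{out}_{\bv}}\right)^T\widehat{\mb\Psi^{out}_{\bv}}$ as $\sum_{r=1}^{k_{\bv}}\hat{\psi}^i_{\bv,r}\hat{\psi}^j_{\bv,r}$. Passing to the hat $\msf B$ then yields $\msf b_{ij}=1$ precisely when this sum is nonzero. The next step is to argue that the sum is nonzero if and only if at least one summand is nonzero, i.e. there is some $r$ with $\hat{\psi}^i_{\bv,r}\neq 0$ and $\hat{\psi}^j_{\bv,r}\neq 0$, equivalently $\psi^i_{\bv,r}\neq 0$ and $\psi^j_{\bv,r}\neq 0$. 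By the definition \eqref{flcons} this is exactly the statement that $\me^i$ and $\me^j$ are flow connected, that is, $\msf c_{\bv,ij}=1$. Comparing the two characterizations gives $\msf C_{\bv}=\msf B$.

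The only point that genuinely needs care — and the sole place the argument could break — is the equivalence ``sum nonzero $\iff$ some summand nonzero''. For a generic product this fails through cancellation, but here each factor $\hat{\psi}^i_{\bv,r}$ takes only the values $0$ or $1$, so every summand $\hat{\psi}^i_{\bv,r}\hat{\psi}^j_{\bv,r}$ is nonnegative and no cancellation can occur; a sum of nonnegative integers vanishes exactly when all its terms do. This is precisely why the hat must be applied to $\mb\Psi^{out}_{\bv}$ before forming the product. As consistency checks, I would note that the resulting matrix is automatically symmetric, being of the Gram form $M^TM$ with $M=\widehat{\mb\Psi^{out}_{\bv}}$ (agreeing with the observation preceding the statement), and that assumption \eqref{ass1} forces each diagonal entry to equal $1$: since no column of $\mb\Psi^{out}_{\bv}$ is identically zero, each $\me^j$ is flow connected to itself.
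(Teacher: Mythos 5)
Your proposal is correct and follows essentially the same argument as the paper: form $\msf B=\widehat{\left(\widehat{\mb\Psi^{out}_{\bv}}\right)^T\widehat{\mb\Psi^{out}_{\bv}}}$, observe that $\msf b_{ij}=1$ iff $\sum_{r=1}^{k_{\bv}}\hat{\psi}^i_{\bv,r}\hat{\psi}^j_{\bv,r}\neq 0$, which (since the hatted entries are nonnegative) happens iff some $r$ has both factors nonzero, i.e. iff $\me^i$ and $\me^j$ are flow connected. The paper treats the diagonal separately via \eqref{ass1} while you fold it into the general case as a consistency check, but this is an inessential difference.
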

\begin{proof}
As before, let  $\msf B =\widehat{\left(\widehat{\mb{\Psi}^{out}_{\bv}}\right)^T   \widehat{\mb{\Psi}^{out}_{\bv}}}.$ Then $\msf b_{ij} =1, i,j\in \mb J_{\bv}^-,$ if and only if
$$
\sum\limits_{r=1}^{k_{\bv}} \hat{\psi}_{\bv,r}^i\hat{\psi}^j_{\bv,r} \neq 0.
$$
Certainly, by \eqref{ass1}, $\msf b_{ii} =1, i\in \mb J_{\bv}^-.$ For $i\neq j$, this occurs if and only if there is $r\in\{1,\ldots,k_{\bv}\}$ such that both $\hat{\psi}_{\bv,r}^i\neq 0$ and $\hat{\psi}^j_{\bv,r} \neq 0$ which is equivalent to $\me^j$ and $\me^i$ being flow connected, that is, $\msf c_{\bv,ij} =1$.
\end{proof}
We adopt an assumption that the structure of flow connectivity is the same as of the geometry at the vertex. Thus, if $\bv$ is an internal vertex  and $j\in \mb J_{\mb v}^+$ and $i\in \mb J^-_{\bv}$, then  $\me^j$ flow connects to $\me^i$.  Mathematically, we  assume that
\begin{equation}
\msf C_{\bv}= \mb 1_{\bv} = \left(\begin{array}{cccc}1&1&\ldots&1\\
\vdots&\vdots&\vdots&\vdots\\
1&1&\ldots&1
\end{array}\right)
\label{sfC}
\end{equation}
for all $\bv \in \Upsilon_t;$ the dimension of $\msf C_{\bv}$ is $|\mb J_{\bv}^-|\times |\mb J^+_{\bv}|$.

If $\bv \in \Upsilon_s$, then we assume that the outflow from $\bv$ cannot be separated into independent subflows, that is,  that the arcs outgoing from $\bv$ cannot be divided into groups such that no arc in any group is flow connected to an arc in any other. Equivalently, for each two arcs $\me^i$ and $\me^j$, $i,j \in \mb J_{\bv}^-,$ there is a sequence $j=j_0, j_1,\ldots, j_k = i$ such that $\me^{j_r}$ and $\me^{j_{r+1}}$, $r=0,\ldots,k-1$ are flow connected. Indeed, if such a division was possible, then it would be impossible to find such a sequence between indices $j$ and $i$ in different groups as some pair would have to connect arcs from these different groups. Conversely, if for some arcs $\me^i$ and $\me^j$ there is no such a sequence, then we can build two groups of indices containing, respectively, $i$ and $j$, by considering all indices for which such sequences can be found. Clearly, no arc in the first group is flow connected to any arc in the second as otherwise there would be a sequence connecting $\me^j$ and $\me^i$. Since, by Lemma \ref{lemmbbc2}, $\msf C_{\bv}$ can be considered as the adjacency matrix of the graph with vertices given by $\{\me^j\}_{j \in \mb J^-_{\bv}}$ and the edges determined by the flow connectivity \eqref{flcons}, and $\msf C_{\bv}$ is symmetric, we see that the above assumption is equivalent to
\begin{equation}
\msf C_{\bv} \;\text{is\;irreducible}.
\label{assirr}
\end{equation}
\begin{remark}
Assumption \eqref{assirr} is weaker than requiring that each two arcs from $\{\me^j\}_{j \in \mb J^-_{\bv}}$ are flow connected. Then we would have $\msf C_{\bv}= \mb 1_{|\mb J_{\bv}^-|\times |\mb J^-_{\bv}|}.$
\end{remark}

\begin{proposition} \label{propKC} Let $\bv\in \Upsilon_t$. If system \eqref{bc2uw}
\begin{equation}
\mb \Psi_{\bv} \mb u(\bv)   = 0
  \label{bc2uw'}
  \end{equation}
contains a Kirchhoff's condition
\begin{equation}
  \sum\limits_{j \in J_{\bv}} (\psi^j_{\bv,r} u^j_1(\bv) + \psi^j_{\bv,r} u^j_2(\bv))=0,
  \label{bc2'}
  \end{equation}
  with $\psi^j_{\bv,r} \neq 0$ for all $j\in J_{\mb v}$ and some $r \in\{1,\ldots, k_{\bv}\}$, then \eqref{sfC} is satisfied.
  \end{proposition}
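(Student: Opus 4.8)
The plan is to read the hypothesis purely as a statement about the sparsity pattern of a single row of $\mb\Psi_{\bv}$ and then feed that pattern directly into the definition of flow connection. First I would note that the Kirchhoff condition \eqref{bc2'} is nothing but the $r$-th row of the system $\mb\Psi_{\bv}\mb u(\bv)=0$, and that the assumption $\psi^j_{\bv,r}\neq 0$ for all $j\in J_{\bv}$ means that in this row the coefficients of both Riemann invariants $u^j_1(\bv)$ and $u^j_2(\bv)$ of every incident edge are nonzero. In other words, the $r$-th row of $\mb\Psi_{\bv}$ has no zero entry whatsoever.

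Next I would translate this from the edge--component labelling used in \eqref{bc2'} to the arc labelling used for the connectivity matrix. Each component $u^j_k(\bv)$ is associated, via Definition \ref{tab1} and the digraph construction, with a unique arc $\me^i$, $i\in \mb J_{\bv}^+\cup\mb J_{\bv}^-$, and $\mb\Psi_{\bv}^{out}$ and $\mb\Psi_{\bv}^{in}$ are precisely the column blocks of $\mb\Psi_{\bv}$ indexed by $\mb J_{\bv}^-$ and $\mb J_{\bv}^+$. Since this relabelling is a bijection leaving each coefficient unchanged, the previous step yields that the $r$-th rows of both $\mb\Psi_{\bv}^{out}$ and $\mb\Psi_{\bv}^{in}$ are entirely nonzero, i.e.\ $\psi_{\bv,r}^i\neq 0$ for every $i\in\mb J_{\bv}^+\cup\mb J_{\bv}^-$.

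Finally I would invoke the definition of flow connection with the single index $i=r$. For arbitrary $j\in\mb J_{\bv}^+$ and $l\in\mb J_{\bv}^-$ we then have $\psi_{\bv,r}^j\neq 0$ and $\psi_{\bv,r}^l\neq 0$ simultaneously, so $\me^j$ flow connects to $\me^l$ and hence $\msf c_{\bv,lj}=1$. As $j$ and $l$ were arbitrary, every entry of $\msf C_{\bv}$ equals $1$, which is exactly \eqref{sfC}; equivalently, one sees through \eqref{mbbc1} that the all-nonzero row $r$ forces every entry of $\widehat{(\widehat{\mb\Psi^{out}_{\bv}})^T\widehat{\mb\Psi^{in}_{\bv}}}$ to be positive.

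The argument is short, and the only point needing genuine care --- the main, if modest, obstacle --- is the bookkeeping in the second step: one must verify that the hypothesis, stated over edges $j\in J_{\bv}$ each carrying two components, really does cover every arc index, both those that happen to be incoming and those that are outgoing. Since at a transient vertex the components of the incident edges are distributed among $\mb J_{\bv}^+$ and $\mb J_{\bv}^-$, this is where one checks that no arc is omitted; once that is settled, the flow-connection conclusion is immediate.
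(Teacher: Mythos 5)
Your proposal is correct and follows essentially the same route as the paper: the hypothesis makes every entry of the $r$-th row of both $\widehat{\mb\Psi_{\bv}^{out}}$ and $\widehat{\mb\Psi_{\bv}^{in}}$ equal to $1$, so every incoming arc flow connects to every outgoing arc and $\msf C_{\bv}=\mb 1_{\bv}$. The only difference is presentational --- you argue directly from the definition of flow connection while the paper invokes the product formula \eqref{mbbc1} of Lemma \ref{lemmbbc1} --- and your extra care about the arc/edge-component bookkeeping is a reasonable, if minor, elaboration.
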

  \begin{proof}
  Condition \eqref{bc2'} ensures that each entry of the $r$-th row of both $\widehat{\mb{\Psi}_{\bv}^{out}}$ and $\widehat{\mb{\Psi}_{\bv}^{in}}$ is 1 and thus the product of each column of $\widehat{\mb{\Psi}_{\bv}^{out}}$ with each column of $\widehat{\mb{\Psi}_{\bv}^{in}}$ is non-zero, which yields \eqref{sfC}.
  \end{proof}
    \begin{example} \label{ex38} Consider the model of \cite{Nic}, analysed in the framework of our approach in \cite[Example 5.12]{JBAB1},
 \begin{equation}
  \p_tp^{j}_1 + K^j \p_x p^j_2  =0, \quad  \p_tp^{j}_2 + L^j \p_x p^j_1  =0,
      \label{sysN}
   \end{equation}
   for $t>0, 0<x<1, 0\leq j\leq m,$ where $K^j>0,L^j>0$  for all $j$.
   For a given vertex $\bv,$ we define $(\mb p_1(\mb v),\mb p_2(\mb v)) = ((p^j_1(\mb v), p_2^j(\mb v))_{j\in J_{\mb v}},$ $\nu^j(\bv) =  -1$ if $ l_j(\bv) = 0$ and $\nu^j(\bv) =  1$ if $l_j(\bv)= 1,$ and  $T_{\bv} \mb p_2 (\bv)= (\nu^j(\bv)p^j_2(\mb v))_{j\in J_{\bv}}.$
      In this case $\alpha_j=1$ for any $j$ and thus for any vertex $\bv$ we need $|J_{\bv}|$ boundary conditions. We focus on $\bv$  with $|E_{\bv}|>1$. Then we split $\mbb R^{|J_{\bv}|}$ into $X_{\bv}$ of dimension $n_{\bv}$ and its orthogonal complement $X_{\bv}^\perp$ of dimension $l_{\bv} = |J_{\bv}|-n_{\bv}$ and require that
      $$
      \mb p_1(\bv) \in X_{\bv}, \quad T_{\mb v}\mb p_2(\bv) \in X^\perp_{\bv},
      $$
that is, denoting $I_1=\{1,\ldots,n_{\bv}\}$ and $I_2=\{n_{\bv}+1,\ldots,|J_{\bv}|\}$,
\begin{equation}\label{Nbc}
\sum\limits_{j\in J_{\bv}}\phi^j_r p^j_1(\bv) = 0, \quad r \in I_2,\quad
\sum\limits_{j\in J_{\bv}}\vhi^j_r \nu^j(\bv)p^j_2(\bv) = 0, \qquad r \in I_1,
\end{equation}
where $((\vhi^j_r)_{j\in J_{\bv}})_{r\in I_1}$ is a base in $X_{\bv}$ and  $((\phi^j_r)_{j\in J_{\bv}})_{r\in I_2}$ is a base in $X^\perp_{\bv}.$
 It is clear that, in  general, boundary conditions \eqref{Nbc} do not satisfy \eqref{sfC}. Consider $\bv$ such that each $\mb e^j$ incident to $\bv$ is parameterised so as $l_j(\bv) =0$ so that each $u^j_1(0)$ is outgoing and each $u^j_2(0)$ is incoming.  If we take $\phi_r^j = \vhi^j_r = \delta_{rj}$ and $L^j=K^j =1$ for $j\in J_{\bv}$, we obtain
\begin{align*}
p^r_1(0) &= u^r_1(0)+ u^r_2(0) =0, \quad r=n_{\bv}+1,\ldots,|J_{\bv}|,\\
p^r_2(0) &= u^r_1(0)- u^r_2(0)=0, \quad r=1,\ldots,n_{\bv}.
\end{align*}
Thus $\widehat{\mb \Psi_{\bv}^{out}}$ and $\widehat{\mb \Psi_{\bv}^{in}}$ are both the identity matrices and \eqref{sfC} is not satisfied.

On the other hand, the Kirchhoff condition,
\begin{equation}
\sum\limits_{j\in J_{\bv}} \nu^{j}(\bv) p^{j}_2(\bv) =0,
\label{nic'}
\end{equation}
see  \cite[Eqn (4)]{Nic}, satisfies \eqref{sfC}, as we have
\begin{align*}
0&=  \sum\limits_{j\in J_{\bv}}\nu^j(\bv)p^j_2(\bv) =  \sum\limits_{j\in J_{\bv}}\nu^j(\bv) (f^j_{+,2}(\bv)u^j_1(\bv) +f^j_{-,2}(\bv)u^j_2(\bv)) \\&= \sum\limits_{j\in J_{\bv}}\nu^j(\bv)\sqrt{K^jL^j} (u^j_1(\bv) -u^j_2(\bv)) \\
&= -\sum\limits_{j \in J^0_{\bv}} \sqrt{K^jL^j}u^j_1(0)
    -\sum\limits_{j\in J^1_{\bv}} \sqrt{K^jL^j}u^j_2(1) \\
    &\phantom{x}+\sum\limits_{j \in J^1_{\bv}} \sqrt{K^jL^j}u^j_1(1)
    +\sum\limits_{j\in J^0_{\bv}} \sqrt{K^jL^j}u^j_2(0),
\end{align*}
where we used \cite[Eqn 5.2]{JBAB1}. Thus the assumption of Proposition \ref{propKC} is satisfied.
\end{example}
\begin{example}
Let us consider the linearized Saint-Venant system,
\begin{equation}
\p_tp^j_1 = -V^j \p_x p^j_1 - H^j\p_x p^j_2, \quad \p_t p^j_2= -g \p_x p^j_1 -V^j \p_x p^j_2,
\label{SV}
\end{equation}
see \cite[Example 1.2]{JBAB1}, assuming that on each edge we have $\la^j_{\pm} = V^j \pm \sqrt{gH^j} >0$. Then we have
\begin{equation}
 \binom{p^j_1}{p^j_2} = \binom{f^j_{+,1} u^j_1 + f^j_{-,1}u^j_2}{f^j_{+,2}u^j_1 + f^j_{-,2}u^j_2} = \binom{H^j u^j_1+ H^j u^j_2}{\sqrt{gH^j}u^j_1 -\sqrt{gH^j}u^j_2}.\label{SNpguw}
 \end{equation}
We use the flow structure of \cite[Example 5.1]{KMN}, shown in Fig. \ref{fig1}, and focus on $\bv_1,$  where we need $2N-2$ boundary conditions which were given as
\begin{figure}
\centering
\begin{tikzpicture}[scale=0.85]
\draw [->] (-3,0) -- (-0.1,0);
\draw  [dashed, ->] (1.6,0.6) to [out=-45,in=45] (1.6,-0.6);
\draw (-0.1,0.3) node {\footnotesize{$\bv_1$}};
\draw(0,0)node{$\bullet$};
\draw (-3,0)node{$\bullet$};
\draw  [->](0,0)--(1.95,0.95);
\draw  [->](0,0)--(2,-1);
\draw[->](0,0)--(0.95,1.95);
\draw [->](0,0)--(0.95,-1.95);
 \draw (-3,-0.3) node {\footnotesize{$0$}};
 \draw (-3,0.3) node {$\bv_0$};
 \draw (1.4,0.4) node {\footnotesize{$e_3$}};
 \draw (1.3,-0.9) node {\footnotesize{$e_{N-1}$}};
 \draw (-1.5,-0.3) node {\footnotesize{$e_1$}};
 \draw (-0.3,-0.2) node {\footnotesize{1}};
  \draw (0.3,0.3) node {\footnotesize{$0$}};
  \draw (0.3,0) node {\footnotesize{$0$}};
  \draw (0.25,-0.3) node {\footnotesize{$0$}};
  \draw (-0.05,-0.25) node {\footnotesize{$0$}};
 \draw (1,2.2) node {\footnotesize{$\bv_2$}};
 \draw (1,1.8) node {\footnotesize{$1$}};
 \draw(1,2)node{$\bullet$};
 \draw(2,1)node{$\bullet$};
 \draw(2.05,-1.05)node{$\bullet$};
 \draw(1,2)node{$\bullet$};
 \draw (1.2,-1.8) node {\footnotesize{$\bv_{N}$}};
 \draw (2.2,-0.8) node {\footnotesize{$\bv_{N-1}$}};
 \draw (2.2,1.1) node {\footnotesize{$\bv_3$}};
 \draw(1,-2)node{$\bullet$};
 \draw (0.8,1.2) node {\footnotesize{$e_2$}};
 \draw (0.2,-1) node {\footnotesize{$e_{N}$}};
 \draw (1.9,-1.1) node {\footnotesize{$1$}};
 \draw (2,0.8) node {\footnotesize{$1$}};
 \draw (0.8,-2) node {\footnotesize{$1$}};
  \end{tikzpicture}
\caption{Starlike network of channels}\label{fig1}
\end{figure}
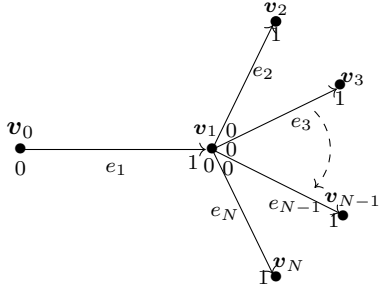
$$
p^j_1(0) = p^1_1(1), \quad p^j_2(0) = p_2^1(1),\qquad  j=2,\ldots, N.
$$
In terms of the Riemann invariants, they can be written as
\begin{align*}
&\left(\begin{array}{ccccccc}H^2&H^2&0&0&\ldots&0&0\\\sqrt{gH^2}&-\sqrt{gH^2}&0&0&\ldots&0&0\\\vdots&\vdots&\vdots&\vdots&\ddots&\vdots&\vdots\\
0&0&0&0&\ldots&H^N&H^N\\0&0&0&0&\ldots&\sqrt{gH^N}&-\sqrt{gH^N}\end{array}\right)\left(\begin{array}{c}u_1^2(0)\\u_2^2(0)\\\vdots \\ u_1^N(0)\\u_2^N(0)\end{array}\right)\\
&\phantom{xxxx}=\left(\begin{array}{cc}H^1&H^1\\\sqrt{gH^1}&-\sqrt{gH^1}\\\vdots\\
H^1&H^1\\\sqrt{gH^1}&-\sqrt{gH^1}\end{array}\right)\left(\begin{array}{c}u_1^1(1)\\u_2^1(1)\end{array}\right)
\end{align*}
and it is clear that \eqref{sfC} is satisfied.
\end{example}
\subsection{Graph reconstruction}
For a matrix $A = (a_{ij})_{1\leq i\leq n,1\leq j\leq m},$ denote by $\mb a^c_j, 1\leq j\leq m,$ the columns of $A$ and by $\mb a^r_i, 1\leq i\leq n,$ the set of its rows. Then we often write
\begin{equation}\label{cr}
A =   (\mb a^c_{j})_{1\leq j\leq m} = (\mb a^r_{i})_{1\leq i\leq n},
\end{equation}
that is, we represent the matrix as a row vector of its columns or a column vector of its rows. In particular, we write
\begin{align*}\mb \Xi_{out} &= (\xi^{out}_{ij})_{1\leq i\leq 2m, 1\leq j\leq 2m} = (\mb\xi^{out,c}_{j})_{1\leq j\leq 2m} = (\mb\xi^{out,r}_{i})_{1\leq i\leq 2m},\\\mb \Xi_{in} &= (\xi^{in}_{ij})_{1\leq i\leq 2m, 1\leq j\leq 2m}= (\mb\xi^{in,c}_{j})_{1\leq j\leq 2m} = (\mb\xi^{in,r}_{i})_{1\leq i\leq 2m}.\end{align*}
For any vector $\mb \mu = (\mu_1,\ldots,\mu_k)$, we define $\text{supp}\;\mb\mu =\{j\in \{1,\ldots,k\};\; \mu_j\neq 0\}$.
\begin{definition} We say that \eqref{syst1} is graph realizable if there is a graph $\Gamma =\{\{\bv_i\}_{1\leq i\leq r}, \{\mb e_k\}_{1\leq k\leq m}\}$ and a grouping of the column indices of $\mb\Xi$ into pairs $(j'_k,j_k^{''})_{1\leq k\leq m}$  such that \eqref{syst1a}  describes a flow along the edges $\mb e_k$ of $\Gamma$, which satisfies a general Kirchhoff's condition at each vertex of $\Gamma$.  In other words, \eqref{syst1} is graph realizable if there is a graph $\Gamma$ and a Kirchhoff's matrix $\mb\Psi$ such that \eqref{syst1a}, \eqref{syst1c} can be written, after possibly permutating rows and columns of $\mb\Xi,$ as \eqref{sysdiag}, \eqref{bcsplit2}.
\end{definition}
Before we formulate the main theorem, we need to introduce some notation. Let us recall that we consider the boundary system \eqref{syst1b1}
$$
\mb \Xi_{out}((\ups_j(0,t))_{j\in J^+},(\w_j(1,t))_{j\in J^-}) =- \mb \Xi_{in}((\ups_j(1,t))_{j\in J^+},(\w_j(0,t))_{j\in J^-}).
$$
 Let us emphasize that in this notation, the column indices on the left  and right hand side correspond to the values of the same function. To shorten notation, let us renumber them as $1\leq j\leq 2m$. As noted in Introduction, appropriate pairs of the columns would determine the edges of the graph $\Gamma$ that we try to reconstruct, hence the first step is to identify the possible vertices of $\Gamma$. For this, we first will try to construct a multi digraph $\mb \Gamma$ on which \eqref{syst1b1} can be written in the form \eqref{bcsplit2} for $(\mb\ups, \mb\w)$. Roughly speaking, this corresponds to $\mb\Xi_{out}$ and $\mb\Xi_{in}$ being composed (up to permutations) of non-communicating blocks corresponding to the vertices. Here, each $j$ should correspond to an arc $\me^j$ and the column $j$ on the left hand side corresponds to the outflow along $\me^j$ from a unique vertex, while the column $j$ on the right hand side corresponds to the inflow along $\me^j$ to a unique vertex. The vertices of $\mb\Gamma$ should be then determined by a suitable partition of the rows of $\mb \Xi_{out}$ (and of $\mb \Xi_{in}$).

 In the second step we will determine additional assumptions that allow $\mb\Gamma$ to be collapsed into a graph $\Gamma$ on which \eqref{syst1b1} can be written in the form \eqref{bcsplit2}.

Since we do not want \eqref{syst1b1} to be under- or over-determined, we assume
\begin{equation}
\forall_{1\leq j\leq 2m}\; \mb\xi^{out,c}_j\neq 0\;\text{and}\; \mb \xi^{out,r}_j \neq 0.
\label{assout}
\end{equation}
Our strategy is to treat $\mb\Xi_{out}$ and $\mb \Xi_{in}$ as the outgoing and incoming incidence matrices of a multi digraph with vertices `smeared' over subnetworks of flow connections. Thus we assume that
\begin{equation}
\msf A\!\! :=\!\!\widehat {\left(\widehat{\mb \Xi_{out}}\right)^T\widehat{\mb \Xi_{in}}}\;\;\text{is\;the\;adjacency\;matrix\;of\;a\;line\;digraph}.
\label{MAss}
\end{equation}
For $\msf A$, let $V^{out}_j$ and $V^{in}_i$ be  groups of row and column indices, respectively, potentially outgoing from (respectively incoming to) a vertex, see Appendix \ref{appA}. We introduce
$$
I :=\{i\in \{1,\ldots,2m\};\; \mb\xi^{in,r}_i=0\}
$$
and assume that
\begin{equation}\forall_{i \in I}\exists_{1\leq j\leq M'}\;\spp \mb \xi^{out,r}_i \subset V^{out}_j,\label{ass3}
\end{equation}
where $\spp$ denotes the support of the vector. In the next proposition we shall show that $V^{out}_j$ and $V^{in}_i$ determine a partition of the row indices into sets that can be used to define vertices. The idea is that if supports of columns of $\mb \Xi_{out}$ or $\mb \Xi_{in}$ overlap, the arcs determined by these columns must be incident to the same vertex.
\begin{proposition}
If \eqref{assout}, \eqref{MAss} and \eqref{ass3} are satisfied, then the sets
\begin{equation}\{\{\mc V_i\}_{1\leq i\leq n}, \mc V_S\},\label{rowpart1}
\end{equation}
 where
\begin{align}
\mc V_S &=\{i \in \{1,\ldots,2m\};\; \mathrm{supp\;} \mb \xi^{out,r}_i \subset V^{out}_{M'}\}, \label{parts}\\
\mc V_i &= \bigcup\limits_{s \in V^{out}_i}\mathrm{supp\;} \mb {\xi}^{out,c}_s,\;1\leq i\leq n, \label{partint}
\end{align}
form a partition of the row indices of both $\mb \Xi_{out}$ and $\mb \Xi_{in}$ such that if for any $j$, $\mathrm{supp\;}\mb\xi^{out,c}_j\cap \mc V_i\neq \emptyset$ for some $i=1,\ldots,n, S$, then $\mathrm{supp\;}\mb\xi^{out,c}_j\subset \mc V_i,$ and if for any $k$, $\mathrm{supp\;}\mb\xi^{in,c}_k\cap \mc V_l\neq \emptyset$ for some $l= 1,\ldots,n, S$, then $\mathrm{supp\;}\mb\xi^{in,c}_k \subset \mc V_l.$\label{admitg}
\end{proposition}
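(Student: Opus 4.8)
The plan is to read the block structure of a line digraph off assumption \eqref{MAss} and then recognise the proposed sets as the fibres of a well-defined map sending each boundary row $r$ to the unique row-group of $\msf A$ that carries the support of its outgoing part. First I would invoke the line-digraph characterization (Appendix \ref{appA}, \cite[Theorem 4.5.1]{Bang}): from \eqref{MAss} there are $M'\in\mathbb N$ and partitions $\{V^{out}_t\}_{t=1}^{M'}$ of the row indices and $\{V^{in}_t\}_{t=1}^{M'}$ of the column indices of $\msf A$ such that $\msf A_{ij}=1$ if and only if $i\in V^{out}_t$ and $j\in V^{in}_t$ for a (necessarily unique) $t$, with $V^{out}_{M'}$ the class of the zero rows of $\msf A$, here $n:=M'-1$. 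For a boundary row $r$ I write $S_r:=\spp\mb\xi^{out,r}_r$; by \eqref{assout} every $S_r$ and every column support $\spp\mb\xi^{out,c}_s$ is nonempty. Since $\msf A_{sj}=1$ iff some row $r$ has $\xi^{out}_{rs}\neq0$ and $\xi^{in}_{rj}\neq0$, one immediately gets $V^{out}_{M'}=\{s:\ \spp\mb\xi^{out,c}_s\subseteq I\}$, tying the zero-row class to the set $I$.

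The crux is the localisation claim: for every $r$ there is a unique $t(r)$ with $S_r\subseteq V^{out}_{t(r)}$. For $r\notin I$ I would argue directly from the block structure. Choose $j_0\in\spp\mb\xi^{in,r}_r$ (nonempty as $r\notin I$); for each $s\in S_r$ the row $r$ witnesses $\msf A_{sj_0}=1$, so $s$ lies in the row-group $V^{out}_\tau$ matched to the column-group $V^{in}_\tau\ni j_0$. As $\{V^{in}_t\}$ is a partition, $\tau$ depends only on $j_0$, hence is independent of $s$, giving $S_r\subseteq V^{out}_\tau$; moreover $\tau\le n$ because the rows of $\msf A$ indexed by $S_r$ are nonzero. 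For $r\in I$ the very same conclusion is delivered verbatim by \eqref{ass3}. Thus $t(\cdot)$ is a total function on the $2m$ rows. One then checks $\mc V_i=\{r:\ t(r)=i\}$ for $1\le i\le n$ and $\mc V_S=\{r:\ t(r)=M'\}$, since $r\in\bigcup_{s\in V^{out}_i}\spp\mb\xi^{out,c}_s$ iff $S_r\cap V^{out}_i\neq\emptyset$ iff $t(r)=i$. Being the fibres of a function into $\{1,\dots,M'\}$, the sets $\{\mc V_1,\dots,\mc V_n,\mc V_S\}$ partition the common row index set of $\mb\Xi_{out}$ and $\mb\Xi_{in}$, which is \eqref{rowpart1}.

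The two compatibility statements then follow from the same localisation. For a column of $\mb\Xi_{out}$, every $r\in\spp\mb\xi^{out,c}_j$ has $j\in S_r\subseteq V^{out}_{t(r)}$; as $j$ belongs to a single row-group, $t(r)$ is constant on $\spp\mb\xi^{out,c}_j$, so this support lies in one block, forcing $\spp\mb\xi^{out,c}_j\subseteq\mc V_i$ whenever it meets $\mc V_i$. For a column of $\mb\Xi_{in}$, every $r\in\spp\mb\xi^{in,c}_k$ satisfies $r\notin I$, and choosing any $s\in S_r$ gives $\msf A_{sk}=1$, pinning $s$, and hence $t(r)$, to the row-group matched to $V^{in}_{\tau_k}\ni k$, the same value for all such $r$; thus $\spp\mb\xi^{in,c}_k$ also sits in one block, giving the second implication (and incidentally showing it can only meet the blocks $\mc V_i$, never $\mc V_S$).

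The step I expect to be the main obstacle is the faithful extraction of the line-digraph data in the first paragraph — in particular justifying that the zero rows of $\msf A$ constitute precisely the single distinguished class $V^{out}_{M'}$ and that the pairing of $\{V^{out}_t\}$ with $\{V^{in}_t\}$ is exactly the one built into \eqref{parts}--\eqref{partint}; everything downstream is bookkeeping with supports. The role of \eqref{ass3} is precisely to rescue the rows in $I$ (morally, the boundary conditions at sources), where the block structure of $\msf A$ is silent because the associated rows of $\msf A$ vanish.
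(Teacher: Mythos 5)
Your argument is correct and rests on the same ingredients as the paper's proof: the block structure of $\msf A$ as a line-digraph adjacency matrix coming from \eqref{MAss}, assumption \eqref{ass3} to handle the rows in $I$, and \eqref{assout} to guarantee every row and column support is nonempty. The only real difference is organizational — by making the localisation claim ($S_r\subseteq V^{out}_{t(r)}$ for a unique $t(r)$) explicit and presenting $\mc V_1,\dots,\mc V_n,\mc V_S$ as the fibres of the map $r\mapsto t(r)$, you get disjointness and covering for free, whereas the paper reaches the same conclusions by directly manipulating the sets $\mc V^{out}_i$, $\mc V^{in}_j$ and proving their near-coincidence \eqref{vinvout}; the content is the same.
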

\begin{proof}
By \eqref{MAss}, $\msf A$ is the adjacency matrix of $L(\mb \Gamma)$  for some multi digraph $\mb\Gamma$. As explained in Appendix \ref{appA}, we can reconstruct $\mb\Gamma$ with the transient vertices defined in a unique way and admissible sources and sinks. Let us fix such a construction. Then we have the sets $\{V^{in}_j\}_{1\leq j\leq n}$ and $\{V^{out}_i\}_{1\leq i\leq n}$ of incoming and outgoing arcs determining any transient vertex. Further, we have (possibly) the sets $V^{in}_{N'}$ and $V^{out}_{M'}$ that group the arcs incoming to sink(s) and, respectively, outgoing from  source(s). Since $\msf A$ represents all arcs, the same decomposition is valid for $\mb \Xi_{out}$ and $\mb \Xi_{in}$, that is, we have  subdivisions $\{V^{out}_i\}_{1\leq i\leq M'}$ and $\{V^{in}_j\}_{1\leq j\leq N'}$ of the columns  of $\mb \Xi_{out}$ and $\mb \Xi_{in}$, respectively, and hence the correspondence of the columns  with the vertices. Thus we have to show that \eqref{rowpart1} is a partition  of the rows of $\mb\Xi_{out}$ and $\mb \Xi_{in}$ satisfying the conditions of the proposition.

Let us recall that the entry $\msf a_{ij}$ of $\msf A$ is defined by $\widehat{\widehat{\mb {\xi}^{out,c}_i}\cdot \widehat{\mb {\xi}^{in,c}_j}}$ and if a row $k$ of $\msf A$ is zero, that is, it represents a source, then there is a zero row in  ${\mb \Xi_{in}}$. Indeed, since, by \eqref{assout}, $\spp \mb {\xi}^{out,c}_k \neq \emptyset$, there is a nonzero entry, say, $\xi^{out}_{lk}$ and thus we must have $\xi^{in}_{lj} =0$ for any $j$. So, to every zero row in $\msf A,$ there corresponds a zero row in $\mb \Xi_{in}$. There may be, however, other zero rows in $\mb\Xi_{in}$.

To determine the rows in $\mb\Xi_{out}$, corresponding to sources, we consider \eqref{ass3}.
First we note that any $j$ of that assumption, if it exists, is determined in a unique way as the sets $V^{out}_j$ are not overlapping. Next we observe that if $\spp \mb \xi^{out,r}_i \subset V^{out}_j$ and $\spp \mb \xi^{out,r}_i\cap \spp \mb \xi^{out,r}_k \neq \emptyset$, then $\spp \mb \xi^{out,r}_k \subset V^{out}_j$. Indeed, if $\spp \mb \xi^{out,r}_k \subset V^{out}_p$ and $l\in \spp \mb \xi^{out,r}_i\cap \spp \mb \xi^{out,r}_k$, then $l \in V^{out}_j\cap V^{out}_p$ which implies $V^{out}_j = V^{out}_p.$ Thus we can define the set

$$\mc V_S =\{i \in \{1,\ldots,2m\};\; \spp \mb \xi^{out,r}_i \subset V^{out}_{M'}\}.$$

For any $i\in \mc V_S$ and $q\in \spp \mb \xi^{out,r}_i,$ we have $\spp \mb\xi^{out,c}_q\subset I,$ as otherwise there would be a nonzero product $\mb\xi^{out,c}_q\cdot \mb\xi^{in,c}_s$ for some $s$ as $\mb\xi^{in,r}_{t} \neq 0$ for each $t\notin I$. Then, let there be $k$ such that $\spp \mb\xi^{out,c}_p \cap \spp \mb\xi^{out,c}_j\ni k$ for some $j \in V^{out}_{M'}$. This means, by \eqref{ass3}, that $\spp\mb \xi^{out,r}_k \subset V^{out}_{M'}$ and hence $p \in  V^{out}_{M'}$. Consider any nonzero element of $\mb \xi^{out,c}_p,$ say, $\xi^{out}_{lp}\neq 0$. By the above argument, $l \in I$. If $\spp \mb\xi^{out,r}_l \subset V^{out}_{M'}$, then $l\in \mc V_S$. If not, $\spp \mb\xi^{out,r}_l\cap V^{out}_j\neq \emptyset$ for some $j\neq M'$ which contradicts \eqref{ass3}. Thus, $\mc V_S$ satisfies the first part of the statement. The second part is void as there is no $\mb\xi^{in,c}_j$ with $\spp\mb\xi^{in,c}_j\cap \mc V_S\neq \emptyset.$
Therefore all indices $i\in \mc V_S,$ that is, such that  $\spp \mb \xi^{out,r}_i \subset V^{out}_{M'},$ determine a source as there is no connection to any inflow.

Now, consider the indices $i\in I\setminus \mc V_S$. Then, again by \eqref{ass3}, for any $i\in I\setminus \mc V_S$ there is a unique $j\neq M'$ such that $\spp \mb \xi^{out,r}_i \subset V^{out}_{j}$, that is, such an $i$ belongs to the vertex determined by  $V^{out}_{j}$. This determines a partition of $I$ corresponding to the vertices (recall that there are no zero rows in $\mb\Xi_{out}$ and so each row must belong to a vertex).

 Next we associate the remaining rows in  $\mb \Xi_{out}$ and $\mb \Xi_{in}$ with the vertices. Consider $V^{out}_i$ and $V^{in}_j$ for some $1\leq i\leq n$ and $j$ defined by \eqref{vertid}. The non-zero entries $\msf a_{pq}$ of $\msf A$, where $p\in V^{out}_i$ and $q\in V^{in}_j$, occur whenever $\spp \mb {\xi}^{out,c}_p \cap \spp \mb {\xi}^{in,c}_q\neq \emptyset$. Hence, the rows with indices $k \in \spp \mb {\xi}^{out,c}_p \cap \spp \mb {\xi}^{in,c}_q$ must belong to a vertex through which the incoming arc $\me^q$ communicates with the outgoing arc $\me^p$. Since all nonzero entries in, respectively, $\spp \mb {\xi}^{out,c}_p$ and $ \spp \mb {\xi}^{in,c}_q$ reflect non-zero outflow along $\me^p,$ respectively, inflow along $\me^q$, $\spp \mb {\xi}^{out,c}_p$ and $ \spp \mb {\xi}^{in,c}_q$ must belong to the same vertex. Since the same is true for any indices from $V^{out}_i$ and $V^{in}_j$, plausible partitions of row indices of $\mb\Xi_{out}$ and $\mb\Xi_{in}$ defining vertices are,
$$
\mc V^{out}_i = \bigcup\limits_{s \in V^{out}_i}\spp \mb {\xi}^{out,c}_s, \qquad \mc V^{in}_j = \bigcup\limits_{q \in V^{in}_j}\spp \mb {\xi}^{in,c}_q.
$$
We first observe that if $V^{out}_i$ and $V^{in}_j$ determine the same transient vertex, then
\begin{equation}
\mc V^{out}_i\setminus \{s \in \mc V^{out}_i;\; \mb \xi^{in,r}_s =0\} = \mc V^{in}_j.
\label{vinvout}
\end{equation}
Indeed, let $p \in \mc V^{out}_i\setminus \{s \in \mc V^{out}_i;\; \mb \xi^{in,r}_s =0\}$. Then there is $s \in V^{out}_i$ such that $p \in \spp \mb {\xi}^{out,c}_s$. Since $p\notin \{s \in \mc V^{out}_i;\; \mb \xi^{in,r}_s =0\}$, there is $q$ such that $ \xi^{in}_{pq}\neq 0$ and thus $\widehat{\mb \xi^{out,c}_s} \cdot \widehat{\mb \xi^{in,c}_q} \neq 0$. Hence, $q \in V^{in}_j$ and consequently $p \in \mc V^{in}_j$. The converse can be proved in the same way by using assumption \eqref{assout} since if $p\in \mc V^{in}_j$ then, by construction, $p$ must belong to a support of some  $\mb {\xi}^{in,c}_q$ and thus cannot be in $\{s \in \mc V^{out}_i;\; \mb \xi^{in,r}_s =0\}$. As we see, if $\mc V^{out}_i$ contains rows $\mb\xi^{out,r}_k$ with $k\in I\setminus \mc V_S$, then these rows satisfy $\spp \mb\xi^{out,r}_k \subset V^{out}_i$. If we add the indices of such rows to $\mc V^{in}_j$ with $V^{in}_j$ determining the same vertex as $V^{out}_i$, then such an augmented $\mc V^{in}_j$ will be equal to $\mc V^{out}_i$ and thus we use can use \eqref{rowpart1} to denote the partition of $\{1,\ldots,2m\}$ into $\mc V_1^{out}, \ldots, \mc V^{out}_n,\mc V_S$.

We easily check that this partition satisfies the conditions of the proposition. We have already checked this for $\mc V_S$. So, let $\spp\mb \xi^{out,c}_q \cap \mc V^{out}_i\neq\emptyset $ for some $1\leq i\leq n$, then there is $s\in V^{out}_i$ such that $k\in \spp\mb \xi^{out,c}_q \cap \spp\mb \xi^{out,c}_s$. Clearly, $k\notin \mc V_S$ by the construction of $\mc V^{out}_i$. If $k \in I\setminus \mc V_S$, then $q \in V^{out}_i$ by assumption \eqref{ass3} and hence $\spp\mb\xi^{out,c}_q \subset \mc V^{out}_i$. If $k\notin I$, then $\widehat{
\mb \xi^{out,c}_s}\cdot \widehat{\mb \xi^{in,c}_p}\neq 0$ for some $p$ but then also $\widehat{\mb\xi^{in,c}_p}\cdot \widehat{\mb\xi^{out,c}_q}\neq 0$ and hence $p \in V^{in}_j$, yielding $q \in V^{out}_i$ and consequently $\spp \mb\xi^{out,c}_q \subset \mc V^{out}_i.$ Similarly, if $\spp\mb\xi^{in,c}_p \cap \mc V^{out}_i\neq\emptyset$, then there is $k \in \spp\mb\xi^{in,c}_p\cap \spp\mb\xi^{out,c}_q$ for some $q \in V^{out}_i$. But then, immediately from the definition, $\spp\mb\xi^{in,c}_p\subset \mc V^{in}_j\subset \mc V^{out}_i$ by \eqref{vinvout}.
\end{proof}
We note that \eqref{rowpart1} does not contain  rows corresponding to sinks  and they must be added following the rules described in Appendix \ref{appA}. With such an extension,  we consider the multi digraph $\mb \Gamma$, determined by
\begin{equation}
\{\{\mc V_i\}_{1\leq i\leq n}, \mc V_S, \mc V_Z\},\quad \{\{V^{out}_i\}_{1\leq i\leq n}, V^{out}_{M'}, \emptyset\}, \quad \{\{V^{in}_{j_i}\}_{1\leq i\leq n},\emptyset, V^{in}_{N'}\},
\label{mbgamma}
\end{equation}
where the association $i\mapsto j_i$ is defined in \eqref{vertid}. By construction, if we take the triple $\mc V_i,V^{out}_i,V^{in}_{j_i}$, $1\leq i\leq n$,  it determines a transient vertex, the outgoing arcs given by the indices of columns in  $\mb \Xi_{out}$ and the incoming arcs given by the indices of columns in $\mb\Xi_{in}$. Similarly, the pair $\mc V_S, V^{out}_{M'}$ determines the sources and all outgoing arcs, while the set of incoming arcs is empty. Thus, if we denote by $\mb\Xi^i_{out}$ and $\mb\Xi^i_{in}$ the submatrices of $\mb\Xi_{out}$ and $\mb\Xi_{in}$ consisting of the rows with indices in $\mc V_i$ and columns in $V^{out}_i$ and $V^{in}_{j_i}$, respectively, with an obvious modification for $\mc V_S$, then \eqref{syst1b1} decouples into $n$ (or  $n+1$) independent systems
\begin{equation}\label{Gamsys}
\begin{split}
&\mb \Xi^i_{out}((\ups_j(0,t))_{j\in J^+\cap V^{out}_i},(\w_j(1,t))_{j\in J^-\cap V^{out}_i})\\& \phantom{xxx}=- \mb \Xi^i_{in}((\ups_j(1,t))_{j\in J^+\cap V^{in}_{j_i}},(\w_j(0,t))_{j\in J^-\cap V^{in}_{j_i}}), \quad 1\leq i\leq n,\\
&\mb \Xi^S_{out}((\ups_j(0,t))_{j\in J^+\cap V^{out}_{M'}},(\w_j(1,t))_{j\in J^-\cap V^{out}_{M'}}) = 0.
\end{split}
\end{equation}
 This system can be seen as a Kirchhoff system on the multi digraph $\mb \Gamma$ but we need to collapse $\mb \Gamma$ to a graph $\Gamma$ on which \eqref{Gamsys} can be written as \eqref{bcsplit2}. We observe that the question naturally splits into two problems -- one is about collapsing the graph, while the other is about grouping the components of $(\mb\ups,\mb\w)$ into pairs compatible with the parametrization of $\Gamma$.

 Let $\msf A$ be the adjacency matrix of $L(\mb \Gamma)$, with \eqref{ass3} and \eqref{assout} satisfied. As in Appendix \ref{appA}, we can construct outgoing and incoming incidence matrices $A^+$ and $A^-$ but these are uniquely determined only if there are no sources and sinks. We have, however, an additional piece of information about sources.

 If we grouped all sources into one node, as before Proposition \ref{1source}, then, by  Lemma \ref{lemmbbc2}, the flow connectivity in this source was given by
 $$
 \msf C_{\bv}:= \widehat{\left(\widehat{\mb \Xi_{out}^{S}}\right)^T\widehat{\mb \Xi_{out}^{S}}}.
 $$
  However, such a matrix would not necessarily satisfy assumption \eqref{assirr}. Thus,  we separate the arcs into non-communicating groups, each determining a source satisfying \eqref{assirr}.   For this, by simultaneous permutations of rows and columns, $\msf C_{\bv}$ can be written as
  \begin{equation}\label{xis}
  \mb\Xi^S = diag\{ \mb \Xi^S_{i}\}_{1\leq i\leq k},
  \end{equation}
  where $k$ may equal 1. Since the simultaneous permutation of  rows and columns  is given as $P\msf C_{\bv}P^T,$ where $P$ is a suitable permutation matrix, \cite[p. 140]{Mey},  we see that $\mb\Xi^S$ is a symmetric matrix, along with $\msf C_{\bv}$.
By \cite[Sections III $\S$ 1 and  III $\S$ 4]{Gant}, $\mb\Xi^S$ is irreducible if and only if it cannot be transformed by simultaneous row and column permutations to the form \eqref{xis} with $k>1$ (since $\mb\Xi^S$ is symmetric, all off-diagonal blocks must be zero). Then $\mb \Xi^S$ can be reduced to the canonical form, \cite[Section III, Eq. (68)]{Gant}, in which each $\mb\Xi^S_i$ is irreducible. If \eqref{xis} is in the canonical form, then we say that $\mb \Xi^S$ allows for $k$ sources, each satisfying \eqref{assirr}. The indices of the columns contributing to the blocks define the $k$ non-communicating sources $\mc V_{S_1}, \ldots, \mc V_{S_k}$ in $\mb \Gamma$, which we denote $V^{out}_{S_1}, \ldots, V^{out}_{S_k}$, $V^{out}_{M'} = V^{out}_{S_1}\cup \ldots\cup V^{out}_{S_k}$. Finally, define $\mb \xi^{out,r} = (\xi^{out,r}_{S_i j})_{1\leq i\leq k,1\leq j\leq 2m}$ by
  $$
  \xi^{out,r}_{S_i j} = \left\{\begin{array}{lcl} 1 &\text{if}& j\in V^{out}_{S_i},\\0&\text{otherwise}.&\end{array}\right.
  $$
  For the sinks, it is simpler as there is no constraining information  from \eqref{syst1b1}. We have columns with indices in $V^{in}_{N'}$ corresponding to sinks. These are zero columns in $\mb\Xi_{in}$ but the columns with these indices in $\mb\Xi_{out}$ have nonempty supports and thus we can determine from which vertices they are outgoing. Let us denote
  \begin{equation}
  V^{in}_{\mc V_i} = \{j\in V^{in}_{N'};\; \spp \mb\xi^{out,c}_j\cap \mc V_i\neq \emptyset\}, \quad i=1,\ldots,n, S_1,\ldots, S_k.\label{zrodla}
  \end{equation}
    For each $i$ we consider a partition
    \begin{equation}
    V^{in}_{\mc V_i} = V^{in}_{i,{1}}\cup\ldots\cup V^{in}_{i,{l_i}},\quad  i=1,\ldots,n, S_1,\ldots, S_k,
    \label{zrodla1}
    \end{equation}
    where $l_i\leq |V^{in}_{\mc V_i}|,$ into non-overlapping sets $V^{in}_{i,{l}}$, $1\leq l\leq l_i$. Then we define sinks  $\mc V_{i,{l}}$  as the heads of the arcs with indices from $V^{in}_{i, {l}}$; we have $n_z = l_1+\cdots + l_{S_k}$ sinks.  Then, as above, define $\mb \xi^{in,r} = (\xi^{in,r}_{\{i,{l}\}, q})_{i\in \{1,\ldots,S_k\},l\in \{1,\ldots,l_i\},q\in \{1,\ldots, 2m\}}$ by
  $$
  \xi^{in,r}_{\{i,{l}\}, q} = \left\{\begin{array}{lcl} 1 &\text{if}& q\in V^{in}_{i,{l}},\\0&\text{otherwise}.&\end{array}\right.
  $$
  \begin{remark} We expect  $|V^{in}_{\mc V_i}|, i=1,\ldots,n, S_1,\ldots, S_k,$ to be even numbers and  \eqref{zrodla1} to represent a partition of $V^{in}_{\mc V_i}$ into pairs so that   $l_i = |V^{in}_{\mc V_i}|/2.$
  \end{remark}
Then, as in Remark \ref{remA24}, the incoming and outgoing incidence matrices are
$$
A^+ = \left(\begin{array}{c} \msf A^+\\\mb 0\\\mb\xi^{in,r}\end{array}\right), \qquad
A^- = \left(\begin{array}{c} \msf A^-\\\mb\xi^{out,r}\\\mb 0\end{array}\right)
$$
which, by a suitable permutation of columns moving the sources and the sinks to the last positions, can be written, respectively, as
\begin{equation}
\left(\begin{tabular}{c|c|c|c} $\msf A^+_T$&$\msf A^+_S$&0&0\\
\hline
0&0&0&0\\
\hline
0&0&$\msf Z_S$&$\msf Z$\end{tabular}\right)\;\text{and}\; \left(\begin{tabular}{c|c|c|c} $\msf A^-_T$&$0$&$0$&$\msf A^-_Z$\\
\hline
0&$\msf S$&$\msf S_Z$&0\\
\hline
0&$0$&0&0\end{tabular}\right).
\label{ApAm}
\end{equation}
Both matrices have $2m$ columns and $\mc n :=n + k +  n_z$ rows. Hence, as shown in Remark \ref{remA24},
 the adjacency matrix of the full multi digraph $\mb \Gamma$ is given by
\begin{equation}
A(\mb\Gamma) =(a_{ij})_{1\leq i,j\leq \mc n}= A^+(A^-)^T = \left(\begin{tabular}{c|c|c} $\msf A^+_T(\msf A^-_T)^T$&$\msf A^+_S\msf S^T$&$0$\\
\hline
0&0&0\\
\hline
$\msf Z(\msf A_Z^-)^T$&$\msf Z_S(\msf S_Z)^T$&0\end{tabular}\right),\label{agama1}
 \end{equation}where the dimensions of the blocks in the first row are, respectively, $n\times n$, $n\times k$ and $n\times n_z$, in the second row, $k\times n$, $k\times k$ and $k\times n_z$ and in the last one, $n_z\times n$, $n_z\times k$ and $n_z\times n_z$. Thus, if $a_{ij}$ is in the block $(p,q), 1\leq p,q\leq 3, $ then $a_{ji}$ will be in the block $(q,p)$.

  Consider a nonzero pair  $(a_{ij},a_{ji})$ of entries of $A(\mb \Gamma)$. If, say, $a_{ij} = h$, then it means that the $i$-th row of $ A^+$ and $j$-th row of $A^-$ have entry 1 in the same $h$ columns, that is, there are exactly $h$ arcs coming from $\bv_j$ to $\bv_i$.  Similarly, if $a_{ji} = e$, then   there are exactly $e$ arcs coming from $\bv_i$ to $\bv_j$. Conversely, if there are $h$ arcs from $\bv_j$ to $\bv_i$ and $e$ arcs from $\bv_i$ to $\bv_j$, then $(a_{ij},a_{ji})=(h,e).$ In particular,  $h+e =2$ if and only if there are two arcs between $\bv_j$ and $\bv_i$ running either concurrently or countercurrently.  Since the columns of $A^+$ and $A^-$ are indexed in the same way as that of $\mb\Xi_{out}$ and  $\mb\Xi_{in}$, the pair  $(a_{ij},a_{ji})$ determines the rows $\mb a^{+,r}_i$ and $\mb a^{+,r}_j$ of $A^+$ and thus  the indices
  \begin{equation}
  \begin{split}
  &(a_{ij}\mapsto \{k^{ij}_1,\ldots, k^{ij}_h\}, a_{ji}\mapsto \{k^{ji}_1,\ldots, k^{ji}_e\}) \\
  &= (a_{ij}\mapsto \spp \mb a^{+,r}_i\cap \mb a^{-,r}_j, a_{ji}\mapsto\spp \mb a^{+,r}_j\cap \spp \mb a^{-,r}_i)\end{split}
  \label{edges}
  \end{equation}
   of columns of $\mb\Xi_{out}$ (and of $\mb\Xi_{in}$).

 \begin{theorem}
System \eqref{syst1b1} is graph realizable with Kirchhoff's conditions satisfying \eqref{ass1} and \eqref{sfC} for $\bv\in \Upsilon_t$ and \eqref{assirr} for $\bv\in\Upsilon_s$ if and only if, in addition to \eqref{assout}, \eqref{MAss} and \eqref{ass3},
 there is a partition \eqref{zrodla1} such that $A(\mb\Gamma)$ defined by \eqref{agama1} satisfies
 \begin{enumerate}
\item\label{assumption1} \, for any $1\leq i,j\leq \mc n$, $a_{ii}=0$ and  $(a_{ij},a_{ji})$ is in one of the following form
  \begin{equation}
  (2,0),\; (1,1),\; (0,2)\; \text{or}\; (0,0);\label{form}
  \end{equation}
  \item \,if  $(a_{ij},a_{ji})$ determines the indices $k$ and $l$ according to \eqref{edges}, then
  \begin{equation}
    \begin{tabular}{ll}
   if $(a_{ij},a_{ji}) = (2,0)$ or $(0,2)$, &then  $k,l \in J^+$ or $k,l \in J^-$\\& and $c_k\neq c_l,$\\
  if $(a_{ij},a_{ji}) = (1,1),$  &then  $k\in J^+$ and $l \in J^-$\\
  & or $k\in J^-$ and $l \in J^+$.
  \end{tabular}\label{edgeid}
  \end{equation}
\end{enumerate}

\end{theorem}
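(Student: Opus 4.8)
The plan is to establish the equivalence by proving the two implications separately, reserving the bulk of the work for the reconstruction (sufficiency) direction and reading the matrix conditions off the graph in the other. Throughout I would rely on Proposition~\ref{admitg}, which already reduces \eqref{syst1b1} to the vertex-decoupled systems \eqref{Gamsys} once \eqref{assout}, \eqref{MAss} and \eqref{ass3} hold, on the line-digraph reconstruction recalled in Appendix~\ref{appA}, and on the connectivity Lemmas~\ref{lemmbbc1} and \ref{lemmbbc2}.

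\textbf{Necessity.} Starting from a graph realization, i.e.\ a simple metric graph $\Gamma$ and a Kirchhoff matrix $\mb{\Psi}$ for which \eqref{syst1b1} becomes, after permutations, \eqref{sysdiag}--\eqref{bcsplit2}, I would first note that \eqref{ass1} yields \eqref{assout}, that \eqref{sfC} forces every transient local block $\msf C_{\bv}=\widehat{(\widehat{\mb{\Psi}^{out}_{\bv}})^T\widehat{\mb{\Psi}^{in}_{\bv}}}$ to be all-ones so that (by Lemma~\ref{lemmbbc1} and Appendix~\ref{appA}) $\msf A$ is the adjacency matrix of the line digraph of $\mb{\Gamma}$, giving \eqref{MAss}, and that \eqref{assirr} at sources yields \eqref{ass3}. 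The core of this direction is the arc/edge bookkeeping: each edge $\mb e_k$ of $\Gamma$ splits into exactly the two arcs carrying its two Riemann invariants, so between any two vertices there are $0$ or $2$ arcs; simplicity of $\Gamma$ then gives $a_{ii}=0$ and the four admissible pairs \eqref{form}. Whether the pair is concurrent or countercurrent is dictated by the signs of $\la^k_\pm$: an edge in $J_2$ or $J_0$ gives a concurrent pair $(2,0)/(0,2)$ with both indices in $J^+$ or both in $J^-$ and distinct speeds $c_k\neq c_l$, while an edge in $J_1$ gives a countercurrent pair $(1,1)$ with one index in $J^+$ and one in $J^-$, which is exactly \eqref{edgeid}; the sinks of $\Gamma$ supply the partition \eqref{zrodla1}.

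\textbf{Sufficiency.} Here I would run the construction in reverse. By \eqref{MAss} and Appendix~\ref{appA} I reconstruct $\mb{\Gamma}$, its transient vertices uniquely, while the given partition \eqref{zrodla1} fixes the sinks and the irreducible canonical form \eqref{xis} fixes the sources, producing $A(\mb{\Gamma})$ through \eqref{agama1}. Assumption~\ref{assumption1} says every off-diagonal pair is $(2,0),(1,1)$ or $(0,2)$ and all diagonal entries vanish, so the arcs split into pairs each joining one unordered vertex pair and $\mb{\Gamma}$ is loopless. I would then collapse $\mb{\Gamma}$ to a graph $\Gamma$ by merging each pair of arcs, read off from \eqref{edges}, into a single edge $\mb e_k$ parametrized so that the two arcs have $x=0$ at their common tail. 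Assumption~2, via \eqref{edgeid}, guarantees that this merge is consistent with a $2\times2$ system on $\mb e_k$: a $(1,1)$ pair gives mixed signs ($J_1$, one $\ups$ and one $\w$), a $(2,0)$ or $(0,2)$ pair gives equal signs ($J_2$ or $J_0$), and $c_k\neq c_l$ forces $\la^k_+\neq\la^k_-$, so the two components are genuinely the two distinct Riemann invariants. Since Proposition~\ref{admitg} has already localized \eqref{syst1b1} to the independent blocks \eqref{Gamsys}, each vertex block is a local condition $\mb{\Xi}^i_{out}\mb u_{out}(\bv)+\mb{\Xi}^i_{in}\mb u_{in}(\bv)=0$; identifying $\mb{\Xi}^i_{out}=\mb{\Phi}_{\bv}\mc F_{out}(\bv)$ and $\mb{\Xi}^i_{in}=\mb{\Phi}_{\bv}\mc F_{in}(\bv)$ and invoking Proposition~\ref{prop1} with \eqref{compsolv} (inherited from invertibility of the square matrix $\mb{\Xi}_{out}$) recovers a genuine Kirchhoff matrix $\mb{\Psi}$ on $\Gamma$. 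Finally, \eqref{sfC} at each transient vertex is automatic since the corresponding block of $\msf A$ is the all-ones block $\mb 1_{\bv}$ of the line digraph (Lemma~\ref{lemmbbc1}), and \eqref{assirr} at each source holds by irreducibility of the blocks $\mb{\Xi}^S_i$ in \eqref{xis} (Lemma~\ref{lemmbbc2}).

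The step I expect to be the main obstacle is the collapsing and its compatibility with the metric-graph parametrization and the $2\times2$ Riemann-invariant structure: one must verify that the pairing dictated by \eqref{edges} and \eqref{edgeid} can be realized globally and concurrently, so that both arcs of every edge agree on which endpoint is $x=0$, and that the induced local matrices $\mb{\Phi}_{\bv}$ are simultaneously consistent with Definition~\ref{tab1}. Tightly linked to this is the handling of sources and sinks, which are invisible to the line digraph: showing that the freedom in the partition \eqref{zrodla1} and in the irreducible decomposition \eqref{xis} can be used to split them into valid $2\times2$-compatible vertices (with \eqref{assirr} intact) without disturbing the already-fixed transient part is the delicate bookkeeping that makes the equivalence nontrivial.
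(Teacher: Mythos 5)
Your proposal follows essentially the same route as the paper: necessity by reading the block structure of $\mb\Psi^{out},\mb\Psi^{in}$ off the graph realization (each edge of the simple graph $\Gamma$ contributing exactly two arcs, whence $a_{ii}=0$, \eqref{form}, and the sign bookkeeping \eqref{edgeid}), and sufficiency by reconstructing $\mb\Gamma$ via Proposition \ref{admitg} and Appendix \ref{appA} and then collapsing paired arcs into edges with the parametrization fixed consistently by \eqref{edgeid}. Two minor imprecisions, neither fatal: in the paper \eqref{ass3} follows from the block-diagonal structure of $\mb\Psi^{out}$ (the support of every row lies in the column set $V^{out}_j$ of its vertex block), not from \eqref{assirr}; and your appeal to \eqref{compsolv} via ``invertibility of the square matrix $\mb\Xi_{out}$'' is both unjustified (no such invertibility is assumed anywhere) and unnecessary, since the paper's notion of a generalized Kirchhoff condition only requires the count \eqref{kv1}, not solvability for the outgoing data.
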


\begin{proof} \textit {Necessity.} Let us consider the Kirchhoff system \eqref{bcsplit2}. By construction, both matrices $\mb\Psi^{out}$ and $\mb\Psi^{in}$ are in block diagonal form with equal row dimensions of the blocks. We consider the problem already transformed to $\mb\Gamma$.  We note that each arc's index must appear twice in $\mb\Psi$ -- once in $\mb\Psi^{out}$ and once in $\mb\Psi^{in}$ (if there are sinks, the indices of incoming arcs will correspond to the zero columns). Further, whenever  column indices $k$ and $l$ appear in the blocks of, respectively,  $\mb\Psi^{out}$ and $\mb\Psi^{in}$, then $\me^l$ is incoming to, while $\me^k$ is outgoing from, the same vertex (and not any other). Thus, by \eqref{sfC}, the matrix
$$\tilde A = (\tilde a_{ij})_{1\leq i,j\leq 2m}= \widehat{(\widehat{\mb\Psi^{out}})^T\widehat{\mb\Psi^{in}}}$$
is block diagonal with blocks of the form $\msf C_{\mb v} = \mb 1_{\bv},$ except for zero rows corresponding to the sources and zero columns corresponding to sinks. In general, however, the column indices in $\mb\Psi^{out}$ and $\mb\Psi^{in}$ do not correspond to the indices of the arcs they represent. Precisely,  $\tilde a_{ij}= 1$ if and only if there is a vertex for which the arc $\me^{j'}$ is incoming and $\me^{i'}$ is outgoing, where $j'$ and $i'$ are the indices of the arcs that correspond to the columns $j$ and $i$ of, respectively  $\mb\Psi^{in}$ and $\mb\Psi^{out}$.  To address this, we construct $\mb\Xi_{out}$ and $\mb \Xi_{in}$  as
$$
\mb\Xi_{out} = \mb\Psi^{out}P, \qquad \mb \Xi_{in}=\mb\Psi^{in}Q,
$$
where $P$ and $Q$ are permutation matrices, so that in both matrices the column indices $1,\ldots,2m$ correspond to $\me^1,\ldots,\me^{2m}$. Hence
$$
A=(a_{ij})_{1\leq i,j\leq 2m} :=\widehat{(\widehat{\mb\Xi_{out}})^T\widehat{\mb\Xi_{in}}} = \widehat{(\widehat{\mb\Psi^{out}}P)^T\widehat{\mb\Psi^{in}}Q} = P^T\tilde AQ
$$
is a matrix where the indices $1,\ldots,2m$ of both the columns and the rows  correspond to $\me^1,\ldots,\me^{2m}$. Since $\Gamma$ did not have loops, it is clear that $a_{ii} =0$ for all $i=1,\ldots,2m.$ It is also clear that any two columns (or rows) of $\tilde A$ are either equal or orthogonal and this property is preserved by permutations of columns and of rows. Hence, by Proposition \ref{propGR}, $A$ is the adjacency matrix of a line digraph and hence \eqref{MAss} is satisfied. Since the arcs' connections given by $A$ and $\tilde A$ are the same, we see that $A$ is equal to the adjacency matrix of the line graph of $\mb \Gamma$. Therefore, the transient vertices determined by $A$ are the same as in $\mb \Gamma$ (and hence in $\Gamma$). On the other hand, as we know, $A$ does not determine the structure of sources and sinks in $\mb \Gamma$.   The fact that \eqref{assout} is satisfied is a consequence of \eqref{ass1}. For \eqref{ass3},  we recall, see Appendix \ref{appA}, that the sets $V_j^{out}$ group together the indices representing arcs $\me^k$ outgoing from a single vertex, thus they correspond to the blocks $\mb\Psi^{out}_v$ in the matrix $\mb\Psi^{out}$ and therefore \eqref{ass3} is satisfied, even for any $i$.
Next, since $\mb\Gamma$ has been constructed from $\Gamma$, the structure of the blocks in $\mb\Psi^{out}$ corresponding to sources ensures that, after permutations, their entries will coincide with $\mb\Xi^S_{out}$ and thus \eqref{xis} will hold with the blocks in \eqref{xis} exactly  corresponding to the sources in $\mb\Gamma,$ on account of \eqref{assirr}. Similarly, in $\Gamma$ the sinks are determined and thus we have groupings of pairs of the arcs (a partition of the set of indices corresponding to arcs incoming to sinks) coming from transient vertices or sources to sinks and thus the constructions  \eqref{ApAm} and \eqref{agama1} are completely determined. Then we observe that whenever we have a source $\bv$, then the arcs outgoing from $\bv$ must be coming in pairs with a single pair coming from $\mb v$ to any other possible vertex, meaning that the respective entry in $A(\mb\Gamma)$ must be either $(2,0)$ or $(0,2).$ Similar argument holds for the sinks.  Since the problem comes from a graph, by construction, the orientation of the flows is consistent with the parametrization.

\textit{Sufficiency}. Given \eqref{syst1b1}, we have flows $((\ups_j)_{j\in J^+}, (\w_j)_{j\in J^-})$ defined on $(0,1).$   Assumptions \eqref{assout}, \eqref{MAss} and \eqref{ass3} ensure, by Proposition \ref{admitg}, the existence of a multi digraph $\mb\Gamma$ on which \eqref{syst1b1} can be localized to decoupled systems at vertices and written as \eqref{Gamsys}. Precisely speaking, \eqref{MAss} associates the indices of incoming components of the solution at a vertex with incoming arcs and similarly for the indices of the outgoing components. Therefore, if an arc $\me^p$ runs from $\bv_j$ to $\bv_i$, then the flow occurs from $\bv_j$ to $\bv_i$, that is, if $p\in J^+$, then the flow on $\me^p$ is given by $\ups_p$ with $\ups_p(0)$ at $\bv_j$ and $\ups_p(1)$ at $\bv_i$ and analogous statement holds for $p\in J^-$. In other words, the index $p$ of the arc $\me^p$ running from $\bv_j$ to $\bv_i$ determines the orientation of the parametrization: $0\mapsto \bv_j$ and  $1\mapsto \bv_i$ if $p\in J^+$ and  $0\mapsto \bv_i$ and  $1\mapsto \bv_j$ if $p\in J^-.$

Now, assumption \ref{assumption1} ensures that there are no loops at vertices and that between any two vertices there are either two arcs or none. If $a_{ij}$ is an entry in $\msf A^+_S\msf S^T$, $\msf Z(\msf A_Z^-)^T$ or $\msf Z_S(\msf S_Z)^T$, then $a_{ij}=0$ or $a_{ij}=2$ and, , by the  dimensions of the blocks,  respectively, $a_{ji}\in\{0,2\}$ and $a_{ji}=0$. On the other hand, if $a_{ij}$ is an entry in $\msf A^+_T(\msf A^-_T)^T$, then it can take any value $0,1$ or $2$ and the $a_{ji}$ equals, respectively, $2$ or $0$, $1,\,0$. Thus, double arcs indexed, say, by $(k,l)$, between vertices could be combined into edges of an undirected graph (with no loops and multiple edges). However, in this way we construct a combinatorial graph which does not take into account that if $\me^k$ and $\me^l$ are combined into one edge $\mb e$ of $\Gamma$, their orientations must be the same.  Thus, if $(a_{ij},a_{ji}) = (2,0)$ determines the pair of indices $k,l$ according to \eqref{edgeid}, then both  arcs $\me^k$ and $\me^l$ of $\mb \Gamma$ run from $\bv_j$ to $\bv_i$  and the components $k$ and $l$ of the solution  flow concurrently along $\mb e$. By assumption,  $k,l \in J^+$ or  $k,l \in J^-$. In the first case, we  associate $\bv_j$ with 0 and $\bv_i$ with 1 and we have $(\ups_{k}, \ups_{l})$ on $\mb e$, in agreement with the orientation. Otherwise, we  associate $\bv_j$ with 1 and $\bv_i$ with 0 and we have $(\w_{k}, \w_{l})$ on $\mb e$. On the other hand, if  $(a_{ij},a_{ji})=(1,1)$ then, by assumption, either $k\in J^+$ and $l\in J^-$ or $k\in J^-$ and $l\in J^+$ and the components $k$ and $l$ flow countercurrently.  Again, in the first case, $k\in J^+$ and $\me^k$ running from $\bv_j$ to $\bv_i$ requires $\bv_j$ to be associated with  0 and $\bv_i$ with 1, while
$l\in J^-$ and $\me^l$ running from $\bv_i$ to $\bv_j$  also requires $\bv_j$ to be associated with  0 and $\bv_i$ with 1. Thus, we have $(\ups_{k}, \w_{l})$ on $\mb e$. Otherwise, we  associate $\bv_j$ with 1 and $\bv_i$ with 0 and we have $(\w_{k}, \ups_{l})$ on $\mb e$.

Finally, the assumption $c_k\neq c_l$ in the first case of assumption \eqref{edgeid} ensures that the resulting system is hyperbolic on each edge.
\end{proof}

\begin{example}
Let us consider the system
\begin{equation}\label{s1}
\begin{split}
\p_t\ups_{j}+c_j\p_x\ups_j &=0,\quad 1\leq j\leq 4,\\
\p_t\w_{j}-c_j\p_x\w_j &=0,\quad 5\leq j\leq 6,
\end{split}
\end{equation}
where $c_j>0,$ with boundary conditions
\begin{equation}
\left(\!\!\begin{array}{cccccc}0&1&1&0&0&0\\
1&0&0&0&0&0\\
1&1&0&0&0&0\\
0&0&1&1&0&0\\
0&0&0&0&1&0\\
0&0&0&0&0&1\end{array}\!\!
\right)\left(\!\!\!\begin{array}{c}\ups_1(0)\\\ups_2(0)\\\ups_3(0)\\\ups_4(0)\\\w_5(1)\\\w_6(1)\end{array}\!\!\!\right) = \left(\!\!\begin{array}{cccccc}0&0&0&0&0&0\\
0&0&0&0&0&0\\
0&0&0&0&0&0\\
0&0&0&0&0&0\\
1&1&0&0&0&1\\
0&0&1&1&1&0\end{array}\!\!
\right)\left(\!\!\!\begin{array}{c}\ups_1(1)\\\ups_2(1)\\\ups_3(1)\\\ups_4(1)\\\w_5(0)\\\w_6(0)\end{array}\!\!\!\right).
\label{s2}\end{equation}
Thus
$$
A = \widehat{(\widehat{\mb\Xi_{out}})^T\widehat{\mb\Xi_{in}}} = \left(\begin{array}{cccccc}0&0&0&0&0&0\\
0&0&0&0&0&0\\
0&0&0&0&0&0\\
0&0&0&0&0&0\\
1&1&0&0&0&1\\
0&0&1&1&1&0\end{array}
\right).
$$
Thus, there is a multi digraph $\mb\Gamma$ for which $A$ is the adjacency matrix of $L(\mb\Gamma)$.  There is no sink and to determine the structure of the sources, we observe that
$$\mb\Xi^S_{out} = \left(\begin{array}{cccc}0&1&1&0\\
1&0&0&0\\
1&1&0&0\\
0&0&1&1\end{array}\right)
\quad\text{and\;so}\quad
\mb\Xi^S = \widehat{\left(\widehat{\mb \Xi_{out}^{S}}\right)^T\widehat{\mb \Xi_{out}^{S}}} =
\left(\begin{array}{cccc}1&1&0&0\\
1&1&1&0\\
0&1&1&1\\
0&0&1&1\end{array}\right).
$$
This matrix is irreducible and thus we have one source. Therefore
$$
A^+ = \left(\begin{array}{cccccc}1&1&0&0&0&1\\
0&0&1&1&1&0\\
0&0&0&0&0&0\end{array}\right), \qquad A^- = \left(\begin{array}{cccccc}0&0&0&0&1&0\\
0&0&0&0&0&1\\
1&1&1&1&0&0\end{array}\right)
$$
and consequently
$$
A = \left(\begin{tabular}{cc|c}0&1&2\\
1&0&2\\
\hline
0&0&0\end{tabular}\right).
$$
Further,
\begin{align*}
\spp \mb a^{+,r}_1&= \{1,2,6\},\;\spp \mb a^{+,r}_2= \{3,4,5\},\\
\spp \mb a^{-,r}_1&= \{5\},\;\spp \mb a^{-,r}_2= \{6\},\;\spp \mb a^{-,r}_3= \{1,2,3,4\},
\end{align*}
hence, by \eqref{edges},
$$
(a_{12}\mapsto \{6\},a_{21}\mapsto \{5\}), \quad (a_{13}\mapsto \{1,2\},a_{31}\mapsto \emptyset),\quad (a_{23}\mapsto \{3,4\},a_{32}\mapsto \emptyset).
$$
To reconstruct $\Gamma$, we see that $\me^5$ and $\me^6$ should be combined into a single edge $\mb e$. Since, however, $J^+ = \{1,2,3,4\}, J^-=\{5,6\},$ the flow along $\me^5$ runs from 1 to 0 and hence $\mb v_1$ should correspond to 1 in the parametrization, while $\bv_2$ to 0. On the other hand, $\me^6$ runs also from 1 to 0 but from $\bv_2$ to $\bv_1$ and hence $\bv_2$ should correspond to 1, while $\bv_1$ to 0. This contradiction is in agreement with the violation of assumption \eqref{edgeid} as $(a_{12},a_{21})=(1,1)$ but in the corresponding $(k,l)= (6,5)$, both $k$ and $l$ belong to $J^-$.
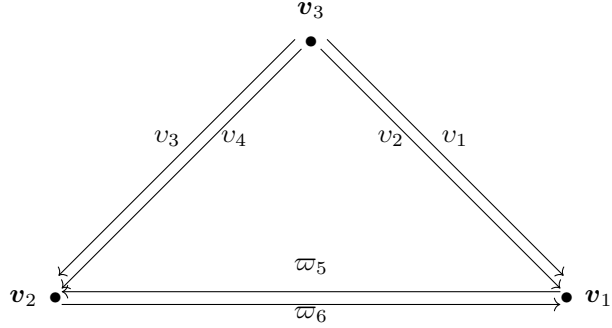
\begin{figure}
\centering
\begin{tikzpicture}[scale=0.85]
\draw[<-](-3.9,0.1) -- (3.9,0.1);
\draw[->](-3.9,-0.1) -- (3.9,-0.1);
\draw[<-](-3.95,0.35) -- (-0.25,4.05);
\draw[<-](-3.9,0.15) -- (-0.15,3.9);
\draw[<-](3.95,0.35) -- (0.25,4.05);
\draw[<-](3.9,0.15) -- (0.15,3.9);
\draw(-4,0)node{$\bullet$};
\draw(4,0)node{$\bullet$};
\draw(0,4)node{$\bullet$};
\draw(-4.5,0)node{$\bv_2$};
\draw(4.5,0)node{$\bv_1$};
\draw(0,4.5)node{$\bv_3$};
\draw(2.25,2.5) node{$\ups_1$};
\draw(1.25,2.5) node{$\ups_2$};
\draw(-2.25,2.5) node{$\ups_3$};
\draw(-1.2,2.5) node{$\ups_4$};
\draw(0,0.5) node{$\w_5$};
\draw(0,-0.25) node{$\w_6$};
  \end{tikzpicture}
\caption{The reconstructed multi digraph $\mb\Gamma$. It is seen that it cannot describe a flow on $\Gamma$ as $\w_5$ and $\w_6$ must flow in the same direction. }\label{figGam1}
\end{figure}
\begin{figure}
\centering
\begin{tikzpicture}[scale=0.85]
\draw[<-](-3.9,0.1) -- (3.9,0.1);
\draw[->](-3.9,-0.1) -- (3.9,-0.1);
\draw[<-](-3.95,0.35) -- (-0.25,4.05);
\draw[<-](-3.9,0.15) -- (-0.15,3.9);
\draw[<-](3.95,0.35) -- (0.25,4.05);
\draw[<-](3.9,0.15) -- (0.15,3.9);
\draw(-4,0)node{$\bullet$};
\draw(4,0)node{$\bullet$};
\draw(0,4)node{$\bullet$};
\draw(-4.5,0)node{$\bv_2$};
\draw(4.5,0)node{$\bv_1$};
\draw(0,4.5)node{$\bv_3$};
\draw(2.25,2.5) node{$\ups_1$};
\draw(1.25,2.5) node{$\ups_2$};
\draw(-2.25,2.5) node{$\ups_3$};
\draw(-1.2,2.5) node{$\ups_4$};
\draw(0,0.5) node{$\ups_5$};
\draw(0,-0.25) node{$\w_6$};
  \end{tikzpicture}
  \caption{The reconstructed multi digraph $\mb\Gamma$ for \eqref{s1'}, \eqref{s2'}}
  \end{figure}
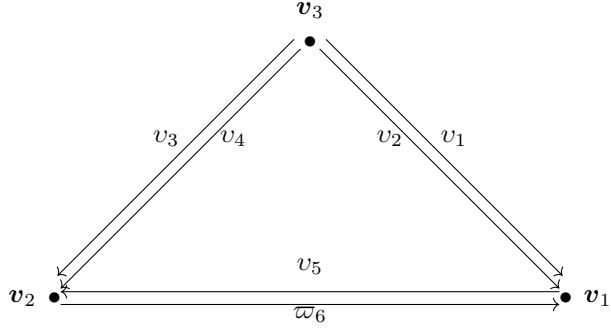

   Consider a small modification of  \eqref{s1}, \eqref{s2},
\begin{equation}\label{s1'}
\begin{split}
\p_t\ups_{j}+c_j\p_x\ups_j &=0,\quad 1\leq j\leq 5,\\
\p_t\w_{6}-c_6\p_x\w_6 &=0,
\end{split}
\end{equation}
$c_j>0$, with the last two boundary conditions of \eqref{s2} accordingly changed to
\begin{equation}\label{s2'}
\begin{split}
\ups_5(0)-\ups_1(1)-\ups_2(1)-\w_6(0)&=0,\\
\w_6(1)-\ups_3(1)-\ups_4(1)-\ups_5(1)&=0.
\end{split}
\end{equation}
The matrices $\mb\Xi_{out}, \mb\Xi_{in}, \mb\Xi^S, A^+,A^-$ and $A$ are the same as above and thus the multi digraph $\mb\Gamma$ is the same as before. However, this time on $\me^5$ we have the flow $\ups_5$, occurring from 0 to 1 and thus $\me^5$ and $\me^6$ can be combined with a parametrization  running from $0$ at $\bv_1$ to $1$ at $\bv_2$.  Assuming $c_1>c_2$, $c_3>c_4$, we identify $u^1_1=\ups_1, u^1_2=\ups_2, u^3_1=\ups_3,u^3_2=\ups_4, u^2_1 = \ups_5,u^2_2=\w^6$ and write \eqref{s1'} as a system of  $2\times 2$ hyperbolic systems on a graph $\Gamma=(\{\bv_1,\bv_2,\bv_3\}, \{\mb e_1, \mb e_2,\mb e_3\})$
\begin{equation}
\begin{array}{c}
\p_t u^1_1+c_1\p_x u^1_1 =0,\\
\p_t u^1_2+c_2\p_x u^1_2=0,
\end{array}\;\begin{array}{c} \p_t u^2_1+c_5\p_x u^2_1 =0,\\
\p_t u^2_2-c_6\p_x u^2_2=0,
\end{array}\; \begin{array}{c}\p_t u^3_1+c_3\p_x u^3_1 =0,\\\p_t u^3_2+c_4\p_x u^3_2=0,
\end{array}
\label{hip1}
\end{equation}
with boundary conditions at
\begin{equation}
\begin{tabular}{c|c}
$\bv_1:$&$u^2_1(0)-u^1_1(1)-u^1_2(1)-u^2_2(0)=0,$\\
\hline
$\bv_2:$&$u^2_2(1)-u^3_1(1)-u^3_2(1)-u^2_1(1)=0,$\\
\hline
$\bv_3:$& $\begin{array}{lc}u^1_2(0)+u^3_1(0)&=0,\\
u^1_1(0)&=0,\\
u^1_1(0)+u^1_2(0)&=0,\\
u^3_1(0)+u^3_2(0)&=0.\end{array}$
\end{tabular}\label{hipbc}
\end{equation}

\begin{figure}\center
\begin{tikzpicture}[scale=0.85]
\draw(-4,0) -- (4,0);
\draw(-4,0) -- (0,4);
\draw(0,4)--(4,0);
\draw(-4,0)node{$\bullet$};
\draw(4,0)node{$\bullet$};
\draw(0,4)node{$\bullet$};
\draw(-4.5,0)node{$\bv_2$};
\draw(4.5,0)node{$\bv_1$};
\draw(0,4.5)node{$\bv_3$};
\draw(-4,0.25)node{$1$};
\draw(-0.25,4)node{$0$};
\draw(0.25,4)node{$0$};
\draw(4,0.25)node{$1$};
\draw(4,-0.25)node{$0$};
\draw(-4,-0.25)node{$1$};
\draw(1,3.5)node{$\mb e_1$};
\draw(3,0.25)node{$\mb e_2$};
\draw(-3.5,1)node{$\mb e_3$};
\draw[->](1.5,3)--(2.5,2);
\draw[->](0.85,2.5)--(1.85,1.5);
\draw(2.5,2.5) node{$u^1_1$};
\draw(1.20,2.5) node{$u^1_2$};
\draw(-2.5,2.5) node{$u^3_1$};
\draw(-1.2,2.5) node{$u^3_2$};
\draw[->](-1.5,3)--(-2.5,2);
\draw[->](-0.85,2.5)--(-1.85,1.5);
\draw[->](0.5,0.25)--(-0.5,0.25);
\draw[<-](0.5,-0.37)--(-0.5,-0.37);
\draw(0,0.5) node{$u^2_1$};
\draw(0,-0.2) node{$u^2_2$};
  \end{tikzpicture}
   \caption{A network $\Gamma$ realizing the flow \eqref{hip1}, \eqref{hipbc}}\label{figG2}
\end{figure}
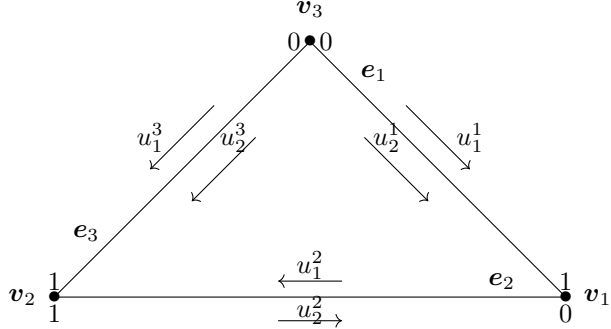

\end{example}

\appendix
\numberwithin{equation}{section}
\setcounter{equation}{0}
\section{Line digraphs}\label{appA}

Consider a digraph $G$ (possibly with multiple arcs but with no loops) and its line graph $L(G)$. For both $G$ and $L(G)$ we consider their adjacency matrices $\msf A(G)$ and $\msf A(L(G))$. The matrix $\msf A(L(G))$ is always binary, with zeroes on the diagonal. Not any binary matrix is the adjacency matrix of a line graph, see \cite{BF1,Bang}. In fact, we have
\begin{proposition}
   A binary matrix $\msf A$ is the adjacency matrix of a line digraph of a multi digraph if and only if all diagonal entries are 0 and any two columns (equivalently rows) of $\msf A$ are either equal or orthogonal (as vectors).\label{propGR}
    \end{proposition}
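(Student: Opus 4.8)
The plan is to exhibit the line-digraph adjacency matrix as a product of incidence matrices and to read the stated conditions off that factorization. For a loopless multi digraph $G$ with vertex set $W$ and arc set $E$, let $B=(b_{we})$ be the tail-incidence matrix, with $b_{we}=1$ iff $w$ is the tail of $e$, and let $C=(c_{we})$ be the head-incidence matrix, with $c_{we}=1$ iff $w$ is the head of $e$. Since each arc has a unique tail and a unique head, every column of $B$ and of $C$ has exactly one nonzero entry. By the definition of the line digraph, $e$ precedes $f$ exactly when the head of $e$ coincides with the tail of $f$, so $\msf A(L(G))=C^{T}B$, where $(C^{T}B)_{ef}=\sum_{w}c_{we}b_{wf}=1$ iff $\mathrm{head}(e)=\mathrm{tail}(f)$.

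Before treating necessity I would dispose of the parenthetical equivalence, which is elementary and independent of the line-digraph interpretation: for any binary matrix, ``any two columns are equal or orthogonal'' means the distinct nonzero columns have pairwise disjoint supports, so each row index lies in the support of at most one column value; every row is therefore the indicator of a single class of equal columns, whence the rows are themselves pairwise equal or orthogonal, and the converse is identical. It thus suffices to argue with columns. Necessity then follows by inspection of $C^{T}B$: the diagonal entry $(e,e)$ equals $1$ iff $\mathrm{head}(e)=\mathrm{tail}(e)$, i.e.\ iff $e$ is a loop, so the diagonal vanishes; and the $f$-th column of $C^{T}B$ is the $\mathrm{tail}(f)$-th row of $C$, namely the indicator of the arcs whose head is $\mathrm{tail}(f)$. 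Hence two columns coincide when their arcs share a tail, and, since an arc has a single head, have disjoint (therefore orthogonal) supports otherwise.

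For sufficiency I would reverse this reading. Given a binary $\msf A=(a_{ef})$ with zero diagonal whose columns are pairwise equal or orthogonal, column-equality partitions $E$ into tail-classes $T_{1},\dots,T_{p}$ (arcs sharing a common column), possibly together with one class carrying the zero column; by the preceding remark, row-equality dually partitions $E$ into head-classes. The key step is a structural lemma: for a tail-class $T$ with nonzero common column, the support $S_{T}$ of that column is exactly a head-class. Indeed, if $e,e'\in S_{T}$ then $a_{ef}=a_{e'f}=1$ for every $f\in T$, so rows $e$ and $e'$ are not orthogonal and hence equal, and conversely any arc whose row equals this common row lies in $S_{T}$. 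The map $T\mapsto S_{T}$ is then an injection from the nonzero tail-classes into the head-classes (distinct tail-classes have distinct column supports), and I would complete the vertex set by taking one internal vertex for each matched pair $(T,S_{T})$, one source for each tail-class carrying the zero column, and one sink for each head-class not of the form $S_{T}$. Defining $\mathrm{tail}(e)$ and $\mathrm{head}(e)$ from the classes of $e$ yields a loopless multi digraph $G$, and recomputing $C^{T}B$ returns $a_{ef}=1$ iff $\mathrm{head}(e)=\mathrm{tail}(f)$, that is, $\msf A=\msf A(L(G))$.

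The one delicate point lies entirely in the sufficiency direction and is the bookkeeping of sources and sinks. Arcs leaving a source all carry the zero column and must be glued to a single tail vertex even though that column matches no head-class, while head-classes never realized as some $S_{T}$ correspond to sinks and must be attached as pure heads. Verifying that $T\mapsto S_{T}$ is well defined and injective, that these leftover classes can be attached consistently so that every arc acquires exactly one tail and one head, and that the reconstructed $C^{T}B$ reproduces $\msf A$, is exactly where the equal-or-orthogonal hypothesis is used in an essential way: it is what forces each of these assignments and guarantees that the pieces are mutually compatible.
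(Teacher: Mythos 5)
Your proof is correct and follows essentially the same route as the paper, which states Proposition \ref{propGR} with a citation rather than a written proof but then develops exactly your sufficiency argument in the surrounding appendix: your tail- and head-classes are the paper's $V^{in}_j$ and $V^{out}_i$ (up to the transpose convention for $\msf A(L(G))$), your matching $T\mapsto S_T$ is the pairing \eqref{vertid}, and your incidence-matrix factorization and its verification are Proposition \ref{1source}. The short necessity computation and the row/column equivalence, which the paper leaves implicit, are supplied correctly.
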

    For our analysis, it is important to understand the reconstruction of $G$ from a matrix $\msf A = (\msf{a}_{ij})_{1\leq i,j\leq m}$ satisfying the above conditions. As in \eqref{cr}, we write
    $$\msf A = (\msf{a}_{ij})_{1\leq i,j\leq m}=(\mb{a}^c_j)_{1\leq j\leq m} = (\mb{a}^r_i)_{1\leq i\leq m}.
    $$
         If for some $i_1$ we have $\msf a_{i_1j_1}=\ldots= \msf a_{i_1j_k} = 1,$ then it means that $\mb e_{j_1}, \ldots, \mb e_{j_k}$ join $\mb e_{i_1}$ and thus they must be incident to the same vertex $\bv$ and all  $\mb e_{i_l}$ for which $\msf a_{i_lj_1}=1$ (and thus all $\msf a_{i_lj_p}=1$ for $p=1,\ldots, k$) are outgoing from $\bv$. We further observe that all zero rows can be identified with  source(s). Similarly, zero columns correspond to sinks. If $\mb a^c_j = \mb a^r_j =0$ for some $j$, then $\mb e_j$ connects a source to a sink.

    Using the adjacency matrix of a line digraph, we cannot determine how many sources or sinks the original graph could have without additional  information. We can lump all potential sources and sinks into one source and one sink, we can have as many sinks and sources as there are zero columns and rows, respectively, or we can subdivide the arcs into some  intermediate arrangement. We describe a construction with one source and one sink and indicate its possible variants.

    We introduce $V^{in}_1 = \{r\in\{1,\ldots,m\};\; \mb a^c_r = \mb a^c_1\}$ and, inductively, $V^{in}_k = \{r\in\{1,\ldots,m\};\; \mb a^c_r = \mb a^c_{j_k}, j_k = \min\{j;\; j\notin \bigcup\limits_{1\leq p\leq k-1}  V^{in}_p\}\}$ and the process terminates at $N'$ such that $\bigcup\limits_{1\leq p\leq N'}  V^{in}_p =\{1,\ldots,m\}$. In the same way, $V^{out}_1 = \{l\in\{1,\ldots,m\};\; \mb a^r_l = \mb a^r_1\}$ and $V^{out}_k = \{l\in\{1,\ldots,m\};\; \mb a^r_l = \mb a^r_{j_k}, j_k = \min\{j;\; j\notin \bigcup\limits_{1\leq p\leq k-1}  V^{out}_p\}\}$ and the process terminates at $M'$ such that $\bigcup\limits_{1\leq p\leq M'}  V^{out}_p =\{1,\ldots,m\}$. In other words,  $\{V^{in}_j\}_{1\leq j\leq N'}$ and $\{V^{out}_i\}_{1\leq i\leq M'}$ represent the vertices of $G$ through, respectively, incoming and outgoing arcs.  If there are any zero rows in $G$, then we swap the corresponding set $V^{out}_{j_0}$ with the last set $V^{out}_{M'}$. In this way,  $V^{out}_{M'}$ represents all arcs outgoing from sources (if they exist). For this construction, we represent them as coming from a single source but other possibilities are allowed, see Remark \ref{remA24}. Similarly, if there are any zero columns, we swap the corresponding set $V^{in}_{i_0}$ with $V^{in}_{N'}$, that is, $V^{in}_{N'}$ represents the arcs incoming to sink(s).  Then we denote
    \begin{equation}
    \begin{split}
     M:= \left\{\begin{array}{lcl} M' &\text{if}& V^{out}_{M'} =\{j;\; \mb a^r_j \neq 0\},\\
     M'-1 &\text{if} & V^{out}_{M'}= \{j;\; \mb a^r_j = 0\},\end{array}\right.\\
 N:= \left\{\begin{array}{lcl} N' &\text{if}& V^{in}_{N'} =\{j;\; \mb a^c_j \neq 0\},\\
     N'-1 &\text{if} & V^{in}_{N'} =\{j;\; \mb a^c_j = 0\}.\end{array}\right.\end{split}
    \end{equation}
   Thus, we see that the number of internal (or transient) vertices, that is, which are neither sources nor sinks is $n:= M=N$. For such vertices it is important to note that, in general, $V^{out}_j$ and $V^{in}_j,$ $1\leq j\leq n,$ do not represent the same vertex. To combine $V^{out}_i$ and $V^{in}_j$ into the same vertex we have, for $1\leq i,j\leq n$,
\begin{equation}
\bv_j = \{V^{in}_j, V^{out}_i\}, \qquad a_{i_pj_r}=1\;\text{for\;some/any\;}i_p \in V^{out}_i, j_r \in V^{in}_j.
\label{vertid}
\end{equation}
With this notation, we  present a more algorithmic way of reconstructing $G$ from $\msf A$.  First, we collapse equal rows of $\msf A$ into a single row of $\msf A^+$ and equal columns of $\msf A$ into a single column and then take the transpose to get $\msf A^-$. Mathematically, let $\mbb I^+$ be a set of indices such that $\mbb I^+\cap V^{out}_i$ consists of exactly one point for each $1\leq i\leq M'$ and ordered by the order of $\{V^{out}_i\}$. Similarly, let $\mbb I^-$ be a set of indices such that $\mbb I^-\cap V^{in}_i$ consists of exactly one point for each $1\leq i\leq N'.$ We order $\mbb I^-$  consistently with $\mbb I^+$, namely, if $i_k\in \mbb I^+$ and $j_k \in \mbb I^-$ are the $k$-th indices in, respectively, $\mbb I^+$ and $\mbb I^-$, then $i_k \in V^{out}_k$ and $j_k\in V^{in}_j$ with  $j$ related to $k$ by \eqref{vertid}, that is, $\{V^{out}_k,V^{in}_j\}$ determines the same vertex $\bv_k$. As mentioned above, possible zero rows correspond to the highest indices. With this, we define
\begin{equation}
\msf A^+ = (\mb a^r_i)_{i\in \mbb I^+}, \qquad \msf A^- = \left((\mb a^c_j)_{j\in \mbb I^-}\right)^T. \label{A3}
\end{equation}
 We see now that each row of $\msf A^+$ corresponds to a vertex and each column of $\msf A^+$ corresponds to an incoming arc. If there are zero rows in $\msf A$, there is a zero row at the bottom of  $\msf A^+$ showing the presence of a (single) source. The presence of a sink is indicated by zero columns in $\msf A^+$.  Similarly, each row of $\msf A^-$ corresponds to a vertex with  arcs outgoing from it represented by nonzero entries in this row, in columns with indices corresponding to the indices of the arcs. If there are zero columns in $\msf A$, they appear as a zero row in $\msf A^-$, which represents a (single) sink. Possible sources are visible in $\msf A^-$ as zero columns.
What is important, however, is that even though we lumped all sources and sinks into one single source and a single sink, the zero columns in $\msf A^+$ and $\msf A^-$ keep track of the arcs going into the sink or out of the source, respectively. Unless there are no sources and sinks, $\msf A^+$ and $\msf A^-$ are not the incoming and outgoing incidence matrices of a graph (for the definition of these, see e.g. \cite{BF1}). Indeed, $\msf A^+$ does not contain sinks that, clearly, are part of the incoming incidence matrix. Similarly, $\msf A^-$ does not include sources. If we keep our requirement that there is only one sink and one source, then we add one row to $\msf A^+$ and one to $\msf A^-$ to represent the sink and the source, respectively. We use convention that, if both the sink and the source are present, the source is the last but one row and the sink is the last one. To determine the entries we use the required property of the incidence matrices, that there is exactly one non-zero entry in each column (expressing the fact that each arc has a unique tail and a unique head). Thus, we put 1 in the added rows in any column that was zero in $\msf A^+$ (resp. $\msf A^-$). We denote such augmented matrices by $A^+$ and $A^-$. It is easy to see that the following result is true.
\begin{proposition}
$A^+$ and $A^-$ are, respectively, incoming and outgoing incidence matrices of a multi digraph $G$ having $\msf A$ as the adjacency matrix of $L(G)$.\label{1source}
\end{proposition}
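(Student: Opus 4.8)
The plan is to recognise $A^+$ and $A^-$ as a genuine pair of incidence matrices and to read the multi digraph $G$ off them directly. Recall that a pair of $\{0,1\}$-matrices sharing a row index set (to be interpreted as the vertices) and a column index set (the arcs) is the incoming/outgoing incidence pair of a multi digraph exactly when each column of each matrix contains precisely one nonzero entry; this records that every arc has a unique head and a unique tail, and conversely any such pair defines a multi digraph $G$ in which the arc indexed by a column $e$ runs from the unique vertex marked in column $e$ of $A^-$ to the unique vertex marked in column $e$ of $A^+$. Thus it suffices to establish two things: (a) every column of $A^+$ and of $A^-$ has exactly one nonzero entry, so that $G$ is well defined and $A^\pm$ are its incidence matrices; and (b) the digraph so obtained satisfies $\msf A = \msf A(L(G))$. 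I would note at the outset that, by the consistent ordering of $\mbb I^+$ and $\mbb I^-$ dictated by \eqref{vertid}, the transient rows of $A^+$ and of $A^-$ are indexed by the same vertices in the same order, and the convention placing the single source as the penultimate row and the single sink as the last row makes the two full row index sets coincide.

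For step (a) I would split each matrix into its transient rows (those inherited from $\msf A^+$, resp.\ $\msf A^-$, via \eqref{A3}) and its two augmented rows. Consider a fixed column $j$ of $A^+$. If it carried two nonzero transient entries, in rows represented by $i_k\in V^{out}_k$ and $i_{k'}\in V^{out}_{k'}$ with $k\neq k'$, then $\mb a^r_{i_k}$ and $\mb a^r_{i_{k'}}$ would be two rows of $\msf A$ lying in distinct classes of equal rows, hence orthogonal by Proposition \ref{propGR}; yet both carry a $1$ in position $j$, a contradiction. So each column of $\msf A^+$ has at most one transient $1$. The source row of $\msf A^+$ (the zero row coming from $V^{out}_{M'}$) contributes nothing, while the augmentation rule places a single $1$ in the added sink row precisely in the columns that are identically zero in $\msf A^+$; since a column carrying a transient $1$ is not zero in $\msf A^+$, every column of $A^+$ ends with exactly one $1$. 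The reasoning for $A^-$ is the mirror image, using orthogonality of distinct columns of $\msf A$ for the transient part and the source augmentation to fill the columns that are zero in $\msf A^-$.

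For step (b) I would use the identity $\msf A(L(G)) = (A^-)^T A^+$, which expresses that an arc $i$ is followed by an arc $j$ in $L(G)$ exactly when the head of $i$ equals the tail of $j$, i.e.\ when some vertex $\bv$ has $A^-_{\bv,i}=A^+_{\bv,j}=1$ (with the paper's convention $\msf a_{ij}=1$ iff $\mathrm{tail}(i)=\mathrm{head}(j)$). In this product the source row and the sink row contribute nothing, each being a zero row in one of the two factors. For a transient vertex $\bv_k$, \eqref{vertid} and \eqref{A3} give that row $k$ of $A^+$ marks exactly the arcs with head $\bv_k$ (the $j$ with $\msf a_{i_k,j}\neq 0$, $i_k\in V^{out}_k$) and that row $k$ of $A^-$ marks exactly the arcs with tail $\bv_k$; hence the $(i,j)$ entry of $(A^-)^T A^+$ equals $1$ iff $\me^i$ and $\me^j$ meet at a common transient vertex, which is exactly $\msf a_{ij}$. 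The mechanism making this exact is that any coincidence $\mathrm{head}(j)=\mathrm{tail}(i)$ forces the common vertex to possess both an incoming and an outgoing arc, hence to be transient; consequently the head/tail coincidences recorded by $L(G)$ are blind to how the sources and sinks are grouped, and lumping all sources into one vertex and all sinks into one vertex changes neither $L(G)$ nor $\msf A$.

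The step I expect to require the most care is the interplay between the augmented source/sink rows and the transient structure. One must verify that ``zero column of $\msf A^+$'' is in exact correspondence with ``arc whose head is the merged sink'' (and dually for $\msf A^-$ and sources), which rests on the vertex trichotomy into transient, source and sink being well defined and consistent across $A^+$ and $A^-$ via \eqref{vertid}. This is precisely where the hypothesis that $\msf A$ genuinely is a line-digraph adjacency matrix (Proposition \ref{propGR}) is essential: it is what guarantees that a vertex which is neither the head nor the tail of any surviving transient relation must be a pure source or a pure sink, so that the augmentation fills the right columns and no column is left empty or doubly occupied. Once this bookkeeping is pinned down, both (a) and (b) follow from the equal-or-orthogonal property by the routine computations above.
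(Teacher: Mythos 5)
Your proof is correct and takes essentially the same route as the paper's: build $G$ from the incidence pair and verify $\msf A(L(G)) = (A^-)^T A^+ = \msf A$ entrywise via the matched classes $V^{out}_i$, $V^{in}_j$ of \eqref{vertid}, with your step (a) (deducing the one-nonzero-entry-per-column property from the equal-or-orthogonal dichotomy of Proposition \ref{propGR}) merely spelling out what the paper absorbs into the construction itself. One cosmetic remark: in step (b) the verbal phrase ``the head of $i$ equals the tail of $j$'' contradicts the formula you write immediately after it, $A^-_{\bv,i}=A^+_{\bv,j}=1$, which says $\mathrm{tail}(i)=\mathrm{head}(j)$ and agrees with the paper's convention for $\msf a_{ij}$; since the matrix identities carry the argument, this is only a wording slip, not a gap.
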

\begin{proof}  Since each column of $A^+$ and $A^-$ contains 1 only in one row, we can construct a multi digraph $G$ from them
using $\msf A(G) = A^+(A^-)^T$ as its adjacency matrix.
Since we allow $G$ to be a multi digraph, the entries of $\msf A(G)$ give the number of arcs joining the vertices. A $(k,l)$ entry in $\msf A(G)$ is given by $\mb a_k^{+,r}\cdot \mb a_l^{-,r}$ and, by construction, $\mb a_k^{+,r}$ is a row in $\msf A$ belonging to $\bv_k$ and  $\mb a_l^{-,r}$ is a column in $\msf A$ corresponding to $\bv_l$.  Nonzero entries in   $\mb a_k^{+,r}$ correspond to the arcs incoming to $\bv_k$ and nonzero entries in $\mb a_l^{-,r}$ correspond to the arcs outgoing from  $\bv_l$ so the value of $\mb a_k^{+,r}\cdot \mb a_l^{-,r}$ is the number of nonzero entries occurring at the same places in both vectors and thus the number of arcs from $\bv_l$ to $\bv_k$.

The adjacency matrix $\msf A(L(G))$ is determined as $(A^-)^T A^+$. The entries of this product are given by $\mb a_k^{-,c}\cdot \mb a_l^{+,c}$. Since each column has only one nonzero entry (equal to 1), the product will be either 0 or 1.  It is 1 if and only if there is $i$ (exactly one) such that the entry 1 appears as the $i$-th coordinate of both $\mb a_k^{-,c}$ and $\mb a_l^{+,c}.$ Now, by construction, $a_{ik}^{-}=1$ if and only if  $k \in V^{out}_i$ and $a_{il}^{+}=1$ if and only if $l \in V^{in}_j,$ where the correspondence between $j$ and $i$ is determined by \eqref{vertid}. This is equivalent to $\msf a_{kl}=1$.
\end{proof}
\begin{remark}\label{remA24}
Assume that $\msf A$ has $k$ zero rows and $l$ zero columns. We cannot identify the numbers of sinks and sources from $\msf A$ without additional information. Above, we lumped all sources and all sinks into, respectively, one source and one sink but sometimes we require more flexibility. As we know, the $k$ zero rows in $\msf A$ become $k$ zero columns in $\msf A^-$ associated with the arcs  outgoing from sources.  In a similar way, the $l$ zero columns in $\msf A$ stay to be $l$ zero columns in $\msf A^+$ associated with the arcs  incoming to sinks. We can group these arcs in an arbitrary way, with each group corresponding to, respectively a source or a sink. Assume we wish to have $\bar k$ sources and $\bar l$ sinks. Then we build the corresponding matrix $A^+$ by adding $\bar k-1$ zero rows for the sources to $\msf A^+$  and $\bar l$ rows corresponding to sinks, which will consist of zeroes everywhere apart from the columns that were zero columns in $\msf A^+$; in these columns we put 1s in such a way that each column contains only one nonzero entry (and zeroes elsewhere). Then columns having 1 in a particular row will represent the arcs incoming to a given sink.  In exactly the same way we augment $\msf A^-$, by creating $\bar l-1$ zero rows for the sinks and $\bar k$ rows for the sources.  In this way, we construct the following incoming and outgoing incidence matrices, respectively, $A^+$ and $A^-$ that, by a suitable permutation of columns, can be written as
\begin{align*}
\bar A^+ &:= A^+P = \left(\begin{tabular}{c|c|c|c} $\msf A^+_T$&$\msf A^+_S$&0&0\\
\hline
0&0&0&0\\
\hline
0&0&$\msf Z_S$&$\msf Z$\end{tabular}\right), \\
\bar A^-&:= A^-P = \left(\begin{tabular}{c|c|c|c} $\msf A^-_T$&$0$&$0$&$\msf A^-_Z$\\
\hline
0&$\msf S$&$\msf S_Z$&0\\
\hline
0&$0$&0&0\end{tabular}\right),
\end{align*}
where $P$ is the required permutation matrix and, in both cases, the first group of columns have indices corresponding to $V^{in}_i, 1\leq i\leq n$ (resp. $V^{out}_j, 1\leq j\leq n$), the second group corresponds to the arcs incoming from the sources to the transient vertices, the third group combines arcs connecting sources and sinks and the last group corresponds to the sinks fed by the transient vertices. We observe that the number of columns in each group in  $A^-$ and $A^+$ is the same. Since
$$
\bar A^+(\bar A^-)^T = (A^+P)(A^-P)^T = A^+PP^T(A^-)^T = A^+A^-,
$$
 as $P^T=P^{-1}$, see \cite[p. 140]{Mey}, for such a digraph $G$ we have
\begin{equation}
A(G) = A^+(A^-)^T = \left(\begin{tabular}{c|c|c} $\msf A^+_T(\msf A^-_T)^T$&$\msf A^+_S\msf S^T$&$0$\\
\hline
0&0&0\\
\hline
$\msf Z(\msf A_Z^-)^T$&$\msf Z_S(\msf S_Z)^T$&0\end{tabular}\right).\label{agama}
 \end{equation}
\end{remark}
\begin{example} Consider the networks $G_1$ and $G_2$ presented on Fig. \ref{figB}. We observe that grouping of sources and sinks does not affect the line graph, $L(G_1) = L(G_2),$ see Fig. \ref{lingraph}. To illustrate the discussion above, we have
\begin{equation}
\msf A = \left(\begin{array}{ccccccc}0&0&0&0&0&0&0\\
0&0&0&0&0&0&0\\
1&1&0&1&0&0&0\\
0&0&1&0&1&0&0\\
0&0&0&0&0&0&0\\
0&0&1&0&1&0&0\\
0&0&0&0&0&0&0\end{array}\right).
\label{msfa}
\end{equation}
Then
\begin{equation}
\msf A^+ = \left(\begin{array}{ccccccc}
1&1&0&1&0&0&0\\
0&0&1&0&1&0&0\\
0&0&0&0&0&0&0\end{array}\right)
\label{msfa+}
\end{equation}
and we see that there are two transient (internal) vertices $\bv_1$ and $\bv_2$ with arcs $\mb e_1,\mb e_2$ and $\mb e_4$ incoming to $\bv_1$ and arcs $\mb e_3$ and $\mb e_5$ are incoming to $\bv_2.$ The last row in $\msf A^+$ corresponds to source(s) with outgoing arcs $\mb e_1,\mb e_2,\mb e_5$ and $\mb e_7$. We also note that the zero columns in $\msf A^+$ correspond to arcs $\mb e_6$ and $\mb e_7$ that are incoming to sinks. To build $\msf A^-$, we first collapse the identical columns of $\msf A$ and take the transpose. We see from $\msf A$ that the first row of the transpose corresponds to the incoming arcs $\mb e_1,\mb e_2$ and $\mb e_4$ and thus also to vertex $\bv_1$ of $\msf A^+.$ Hence, there is no need to re-order the rows and so \eqref{A3} gives
 \begin{equation}
 \msf A^- =
 \left(\begin{array}{ccccccc}
0&0&1&0&0&0&0\\
0&0&0&1&0&1&0\\
0&0&0&0&0&0&0\end{array}\right).
 \label{msfA-}
 \end{equation}
The last row corresponds to sinks and the zero columns inform us that arcs $\mb e_1,\mb e_2, \mb e_5$ and $\mb e_7$ emanate from sources.

If we want to reconstruct the original graph with one source and one sink, then
$$
 A^+ = \left(\begin{array}{ccccccc}
1&1&0&1&0&0&0\\
0&0&1&0&1&0&0\\
0&0&0&0&0&0&0\\
0&0&0&0&0&1&1\end{array}\right),\qquad   A^- =
 \left(\begin{array}{ccccccc}
0&0&1&0&0&0&0\\
0&0&0&1&0&1&0\\
1&1&0&0&1&0&1\\
0&0&0&0&0&0&0\end{array}\right)
$$
and
$$
A^+(A^-)^T =
 \left(\begin{array}{cccc}
0&1&2&0\\
1&0&1&0\\
0&0&0&0\\
0&1&1&0\end{array}\right),
$$
which describes the right multi digraph in Fig. \ref{figB}.
\begin{figure}[h]
\center
\begin{tikzpicture}[->,>=stealth',shorten >=1pt,auto,node distance=1.5cm,
                    semithick, scale=1]
    \tikzstyle{every state}=[fill=none, draw=none, text=black]
    \node[state] (A)                 				  {$\bullet$};
  \node[state]         (B) [below of=A]     {$\bullet$};
  \node[state]         (C) [right of=A]     {$\bullet$};
\node[state]         (D) [left of=B]     {$\bullet$};
\node[state]         (E) [left of=D]     {$\bullet$};
\node[state]         (F) [above of=E]     {$\bullet$};
\node[state]         (G) [left of=E]     {$\bullet$};
\path (A) edge              node [above] {$\mb e_{7}$} (C);
\path (A) edge              node [above] {$\mb e_{1}$} (D);
\path (B) edge              node [above] {$\mb e_{2}$} (D);
\path (D) edge  [bend left]     node {$\mb e_{3}$} (E);
\path (E) edge  [bend left]     node {$\mb e_{4}$} (D);
\path (F) edge     node {$\mb e_{5}$} (E);
\path (E) edge     node [above] {$\mb e_{6}$} (G);
\node[state] (H) [left of=F] {$G_1$};
\end{tikzpicture}\qquad
\begin{tikzpicture}[->,>=stealth',shorten >=1pt,auto,node distance=1.5cm,
                    semithick, scale=1]
    \tikzstyle{every state}=[fill=none, draw=none, text=black]
  \node[state] (A)                 				  {$\bullet$};
  \node[state]         (D) [below of=A]     {$\bullet$};
\node[state]         (E) [left of=D]     {$\bullet$};
\node[state]         (G) [left of=E]     {$\bullet$};
\path (A) edge  [bend left]            node  {$\mb e_{1}$} (D);
\path (A) edge  [bend right]            node  {$\mb e_{2}$} (D);
\path (A) edge   [bend right]           node [above] {$\mb e_{5}$} (E);
\path (A) edge   [bend right]           node [above] {$\mb e_{7}$} (G);
\path (D) edge  [bend left]     node {$\mb e_{3}$} (E);
\path (E) edge  [bend left]     node {$\mb e_{4}$} (D);
\path (E) edge     node [above] {$\mb e_{6}$} (G);
\node[state] (H) [above of=G] {$G_2$};
\end{tikzpicture}
\caption{Multi digraphs $G_1$ with 3 sources and two sinks and $G_2$ with all sources and all sinks grouped into a single source and a single sink}\label{figB}
\end{figure}
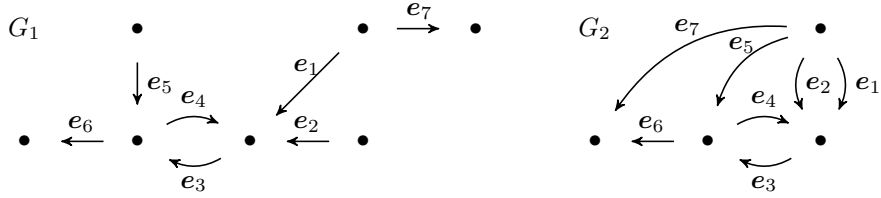
On the other hand, we can consider two sinks (maximum number, as there are two zero columns in $\msf A$) and, say, three sources. Then
$$
 A^+ = \left(\begin{array}{ccccccc}
1&1&0&1&0&0&0\\
0&0&1&0&1&0&0\\
0&0&0&0&0&0&0\\
0&0&0&0&0&0&0\\
0&0&0&0&0&0&0\\
0&0&0&0&0&0&1\\
0&0&0&0&0&1&0\end{array}\right),\qquad   A^- =
 \left(\begin{array}{ccccccc}
0&0&1&0&0&0&0\\
0&0&0&1&0&1&0\\
1&0&0&0&0&0&1\\
0&1&0&0&0&0&0\\
0&0&0&0&1&0&0\\
0&0&0&0&0&0&0\\
0&0&0&0&0&0&0\end{array}\right)
$$
and
$$
A^+(A^-)^T =
 \left(\begin{array}{ccccccc}
0&1&1&1&0&0&0\\
1&0&0&0&1&0&0\\
0&0&0&0&0&0&0\\
0&0&0&0&0&0&0\\
0&0&0&0&0&0&0\\
0&0&1&0&0&0&0\\
0&1&0&0&0&0&0\end{array}\right)
$$
which describes the left multi digraph in Fig. \ref{figB}.

It is easily seen that both digraphs have the same line digraph, shown on Fig. \ref{lingraph}, whose adjacency matrix is $\msf A$.
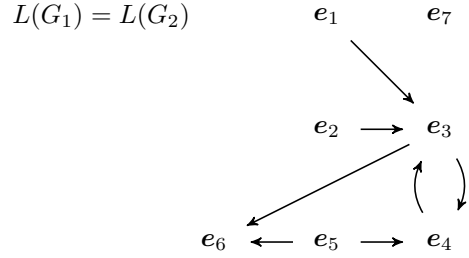
\begin{figure}[h]
\center
\begin{tikzpicture}[->,>=stealth',shorten >=1pt,auto,node distance=1.5cm,
                    semithick, scale=1]
    \tikzstyle{every state}=[fill=none, draw=none, text=black]
  \node[state] (A)                 				  {$\mb e_1$};
  \node[state] (I) [left of =A] {};
  \node[state] (H) [left of =I] {$L(G_1) = L(G_2)$};
  \node[state]         (B) [below of=A]     {$\mb e_2$};
  \node[state]         (C) [right of=A]     {$\mb e_7$};
\node[state]         (D) [below of=C]     {$\mb e_3$};
\node[state]         (E) [below of=D]     {$\mb e_4$};
\node[state]         (F) [below of=B]     {$\mb e_5$};
\node[state]         (G) [left of=F]     {$\mb e_6$};
\path (A) edge               (D);
\path (B) edge               (D);
\path (D) edge  [bend left]   (E);
\path (E) edge  [bend left]   (D);
\path (F) edge     (E);
\path (F) edge     (G);
\path (D) edge     (G);
\end{tikzpicture}
\caption{The line digraph for both $G_1$ and $G_2$}\label{lingraph}
\end{figure}

\end{example}
\bibliographystyle{abbrv}
\bibliography{JBABp2jb2}

 \end{document}